\documentclass[12pt,english]{article}
\usepackage[T1]{fontenc}
\usepackage[latin9]{inputenc}
\usepackage[a4paper]{geometry}
\geometry{verbose,tmargin=3.5cm,bmargin=2.8cm,lmargin=2.3cm,rmargin=2.3cm}
\usepackage{graphicx}
\usepackage{color, soul}
\usepackage{indentfirst}
\usepackage{enumerate}
\usepackage{amsmath}
\usepackage{amsthm}
\usepackage{amssymb}
\usepackage{mathtools}
\usepackage{tikz}
\usetikzlibrary{shapes.geometric, arrows,shapes.misc}
\usepackage{leftindex}
\usepackage{comment}
\usepackage{subcaption}
\usepackage{epsfig}
\usepackage[arrow]{xy}
\usepackage[linktocpage=true]{hyperref}
\usepackage{tocloft} 
\usepackage{babel}
\usepackage[T1]{fontenc}
\graphicspath{ {./images/} }

\tikzstyle{startstop} = [rectangle, rounded corners, minimum width=3cm, minimum height=1cm,text centered, draw=black, fill=red!30]
\tikzstyle{process} = [rectangle, minimum width=3cm, minimum height=1cm, text centered, draw=black, fill=orange!30]
\tikzstyle{decision} = [chamfered rectangle, minimum width=3cm, minimum height=1cm, text centered, draw=black, fill=green!30]
\tikzstyle{arrow} = [thick,->,>=stealth]

\newtheorem{thm}{Theorem}[section]
\newtheorem{definition}[thm]{Definition}
\newtheorem{remark}[thm]{Remark}

\usepackage{authblk}
\makeatletter
\def\thanks#1{\protected@xdef\@thanks{\@thanks
        \protect\footnotetext{#1}}}
\makeatother

\title{Jacobi Hamiltonian Integrators: construction and applications}
\author{Ad\'erito Ara\'ujo}
\author{Gon\c calo Inoc\^encio Oliveira\thanks{g.inoc.oliveira@gmail.com}}
\author{Jo\~ao Nuno Mestre}
\affil{CMUC, University of Coimbra, Department of Mathematics, Portugal.}

\date{}
\begin{document}
	
\maketitle
\begin{abstract}
We propose a systematic framework for constructing geometric integrators for Hamiltonian systems on Jacobi manifolds. By combining Poissonization of Jacobi structures with homogeneous symplectic bi-realizations, Jacobi dynamics are lifted to homogeneous Poisson Hamiltonian systems, enabling the construction of structure-preserving Jacobi Hamiltonian integrators. The resulting schemes are constructed explicitly and applied to a range of examples, including contact Hamiltonian systems and classical models. Numerical experiments highlight their qualitative advantages over standard integrators, including better preservation of geometric structure and improved long-time behavior.
\end{abstract}

\section{Introduction}

Classical Hamiltonian systems are systems of ordinary differential equations that model a wide range of dynamical behaviors and arise from a geometric construction based on a smooth function, called the Hamiltonian, defined on a manifold equipped with a suitable geometric structure \cite{dwivedi2019hamiltonian,libermann1987symplectic}. In classical Hamiltonian mechanics, this manifold is the phase space endowed with a symplectic structure. Symplectic geometry allows one to associate to the Hamiltonian function an intrinsically defined Hamiltonian vector field, whose integral curves describe the time evolution from given initial conditions. The collection of these trajectories defines the Hamiltonian flow, which preserves the Hamiltonian, corresponding to conservation of energy.

Except in special cases, Hamiltonian flows cannot be computed analytically. However, their geometric structure can be exploited to construct accurate numerical approximations using geometric integrators, numerical methods designed to preserve qualitative geometric properties of the exact flow \cite{Hairer2006N}. A prominent example is provided by symplectic integrators, which preserve the symplectic structure and consequently exhibit excellent long-time behavior \cite{sanz1994numerical}. Although such methods do not necessarily preserve the original Hamiltonian exactly, they typically conserve a nearby modified Hamiltonian, leading to bounded energy errors over long time intervals.

The scope of Hamiltonian methods has expanded considerably through the introduction of more general geometric frameworks. Poisson geometry \cite{crainic2021lectures,Weinstein1983} generalizes symplectic geometry by allowing degeneracies and naturally arises in conservative systems with symmetries or singularities. Contact geometry provides a natural setting for modeling dissipative and time-dependent systems, with applications ranging from mechanics to thermodynamics \cite{Bravetti_19,Bravetti_2017,deLeonSardon2017,MRUGALA1991109}. Jacobi geometry \cite{BruceGrabowskaGrabowski2017,Kirillov,LICHNEROWICZ1977}, which encompasses both Poisson and contact structures, offers a unified framework for the description of conservative and dissipative dynamics, as well as systems with symmetries and singularities \cite{deLeonetal2023,LiuZhang2011,SardondeLucasHerranz2015,ZapataPhD,Zapata2020}. In these settings, the geometry again determines how a Hamiltonian function induces a system of differential equations through an associated Hamiltonian vector field. In contrast to the symplectic case, Jacobi and contact geometries encode a controlled evolution of the Hamiltonian along trajectories, reflecting dissipation or external forcing.

In the Poisson case, however, the preservation of the Poisson structure by geometric integrators alone does not guarantee conservation of the Hamiltonian. To address this limitation, Poisson Hamiltonian integrators (PHIs) were introduced in \cite{COSSERAT2023}, generating discrete trajectories that coincide exactly with the flow of a suitably chosen time-dependent Hamiltonian approximating the original one. Building on these ideas, we proposed in \cite{araujo2025jacobihamiltonianintegrators} a class of Jacobi Hamiltonian integrators (JHIs) for Jacobi Hamiltonian systems. These methods preserve the underlying Jacobi structure and produce discrete trajectories that are exact solutions of a modified time-dependent Jacobi Hamiltonian system. As is typical of structure-preserving schemes, JHIs exhibit favorable long-time behavior and avoid the accumulation of large global errors.

The key contributions of this work are as follows. First, we develop a systematic framework for constructing JHIs by exploiting the geometric relationship between Jacobi manifolds and homogeneous Poisson manifolds, using Poissonization and homogeneous symplectic bi-realizations \cite{Monica_Blaga_2020,CMS2022,CrainicMarcut2011,CrainicZhu2004}. Second, we introduce an explicit recursive algorithm for generating JHIs of arbitrary order, based on the Magnus expansion and exact homogeneous Lagrangian bisections. The resulting flows produce well-defined Jacobi maps, preserve intrinsic homogeneity, and, as confirmed by backward error analysis, coincide up to the predicted order with the exact flow of a modified Jacobi Hamiltonian system, ensuring long-time qualitative fidelity. Finally, we demonstrate the effectiveness of our approach through numerical experiments on both low-dimensional illustrative examples and classical models, highlighting the accuracy and strong geometric consistency of JHIs compared with standard numerical schemes.

The paper is organized as follows. Section~\ref{sec:construction} presents the explicit construction of Jacobi Hamiltonian integrators introduced in \cite{araujo2025jacobihamiltonianintegrators}. The construction is based on the Poissonization of Jacobi manifolds and on a Magnus expansion applied to the Jacobi Hamiltonian flow, formulated in terms of homogeneous Lagrangian bisections. In addition, Section~\ref{sec:construction} provides a qualitative analysis of the resulting schemes, establishing their structure-preserving properties, including arbitrary-order accuracy, backward error behavior through a modified Hamiltonian, and exact preservation of homogeneity and Jacobi invariants. Section~\ref{sec:numerical} summarizes the construction in algorithmic form and applies the proposed integrators to several illustrative examples, including contact Hamiltonian systems, low-dimensional test problems, and classical models. Numerical experiments and comparisons with standard integrators are discussed throughout, highlighting the qualitative and quantitative advantages of the proposed approach.


\section{Construction of Jacobi Hamiltonian integrators}\label{sec:construction}

Jacobi geometry generalizes both Poisson and contact geometry, providing a natural framework for modeling time-dependent, dissipative, and thermodynamic systems \cite{Bravetti_19, Bravetti_2017,deLeonetal2023,LiuZhang2011,SardondeLucasHerranz2015,ZapataPhD,Zapata2020}. Standard symplectic or Poisson integrators do not directly apply in this more general setting, which motivates the construction of specialized geometric integrators for Jacobi systems.

\subsection{Construction of the method}
The construction of Jacobi Hamiltonian Integrators (JHI) begins by introducing a homogeneous symplectic bi-realization of the Poissonized manifold, providing the geometric setting in which Hamiltonian flows can be represented as Lagrangian bisections.

A Hamiltonian system on a Jacobi manifold is lifted, via Poissonization, to a homogeneous Poisson Hamiltonian system on the associated principal $\mathbb{R}^\times$-bundle. On this manifold, a symplectic bi-realization can be constructed using a homogeneous Poisson spray, yielding a local homogeneous symplectic groupoid \cite{Monica_Blaga_2020,CrainicMarcut2011,CrainicZhu2004}. Within this framework, Hamiltonian dynamics are conceptually encoded by smooth families of homogeneous Lagrangian bisections governed by a homogeneous Hamilton--Jacobi equation. The maps $\alpha$ and $\beta$ of the bi-realization, once defined, will connect the cotangent bundle with the manifold and ensure the preservation of both the symplectic structure and the homogeneity.

Building on this geometric framework, the numerical integrator is constructed iteratively by approximating the generating functions of these Lagrangian bisections. For time-dependent Hamiltonians, the Magnus expansion provides a systematic tool for generating the coefficients of this approximation, expressing the flow as a single exponential of a Hamiltonian vector field. This representation naturally respects the Hamiltonian Lie algebra structure and will be essential for the backward error analysis in the following section, which establishes the preservation of key geometric features of Jacobi dynamics.

\subsubsection{Poissonization of Jacobi Hamiltonian systems}
A Jacobi structure on a smooth manifold $J$ is given by a bivector field $\Lambda$ and a vector field $E$ on $J$ such that $[\Lambda, \Lambda] = 2 E \wedge \Lambda $ and $[E, \Lambda] = 0$. We call $(J,\Lambda,E)$ a Jacobi manifold. Given a Hamitonian, i.e., a smooth function $H$ on $J$, the associated Jacobi Hamiltonian vector field, determining a Hamiltonian system, is 
\begin{equation}\label{JH_vectorfield}
X_H = \Lambda(\cdot,dH) - H E(\cdot).
\end{equation}

Let $(J, \Lambda, E)$ be a Jacobi manifold with coordinates $x$. The Poissonization procedure \cite{LICHNEROWICZ1977} associates to $J$ a homogeneous Poisson manifold $(P, \Pi, h_z)$, where $\Pi$ is a $-1$-homogeneous Poisson structure 
\begin{equation}\label{poisson_structure}
\Pi = \frac{1}{t}\Lambda + \frac{\partial}{\partial t} \wedge E,\\
\end{equation}
and $h_z$ defines the homogeneity action for $z \in \mathbb{R}^\times$
\[
h_z(x,t) = (x, z t), \qquad T^*h_z(x,t,\xi_x,\xi_t) = (x, zt, z\xi_x, \xi_t).
\]
Under this correspondence, a Hamiltonian system on $J$ lifts to a homogeneous Hamiltonian system on $P$ with a 1-homogeneous Hamiltonian $\hat H$, i.e., $h_z^* \hat H = z \hat H$.

\subsubsection{Homogeneous symplectic bi-realization}

A homogeneous symplectic bi-realization provides the geometric framework in which Hamiltonian flows are represented as Lagrangian bisections. The bi-realization defines the maps
\[
\alpha, \beta: \mathcal{U}\subset T^*P \to P
\]
that connect the cotangent bundle with the manifold while preserving both the Poisson structure and the homogeneity:
\[
h_z \circ \alpha = \alpha \circ T^*h_z, \qquad h_z \circ \beta = \beta \circ T^*h_z.
\]

\subsubsection{Iterative construction of the method} \label{sec: explicit construction}

Once the bi-realization is defined, the iterative algorithm produces a discrete flow that approximates the Hamiltonian dynamics on the Poissonized manifold. Let $\hat{H}$ be the homogeneous Hamiltonian, and let $S_i$ denote the $i$-th term in the iterative construction. The algorithm of order $k$ proceeds recursively:

\begin{enumerate}
    \item[1.] Set $S_1(x) = \hat{H}(x)$;
    \item[2.] Compute
    \[
        S_{i+1}(x) = \frac{1}{(i+1)!} \left. \frac{d^i}{ds^i} \right|_{s=0} \hat{H}\left( \alpha \left(d_x S_s^{(i)} \right) \right), 
        \qquad S_s^{(i)} = \sum_{j=1}^i s^j S_j;
    \]
    \item[3.] For a time step $\Delta s$, find $y_n$ such that
    \[
        \alpha\left(y_n, \sum_{i=1}^k \Delta s^i \nabla_x S_i(y_n) \right) = x_n;
    \]
    \item[4.] Update
    \[
        x_{n+1} := \beta\left(y_n, \sum_{i=1}^k \Delta s^i \nabla_x S_i(y_n) \right).
    \]
\end{enumerate}

Here, each $S_i$ is the $i$-th coefficient of the Taylor expansion of $S_s$, which induces a Hamiltonian flow on the original manifold using $\alpha^{-1}$ and $\beta$.  

The explicit construction of the JHI follows that of the Poisson Hamiltonian Integrator \cite{cosserat2023numerical}; the key point is to verify that step 2 preserves the homogeneous structure.

\begin{thm}\label{thm: S_i homogeneous}
The recursion defined in steps 1 and 2 preserves the homogeneous structure; that is, all generated $S_i$ are 1-homogeneous.
\end{thm}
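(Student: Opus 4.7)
The plan is to induct on $i$. The base case $S_1 = \hat H$ holds by hypothesis, since $\hat H$ is $1$-homogeneous. For the inductive step, assuming $S_1, \ldots, S_i$ are all $1$-homogeneous, the polynomial $S_s^{(i)} = \sum_{j=1}^i s^j S_j$ is likewise $1$-homogeneous for each fixed $s$, and it suffices to show that
\[
F_s(p) \;:=\; \hat H\bigl(\alpha(p,\, d_p S_s^{(i)})\bigr)
\]
is $1$-homogeneous in $p$: then $S_{i+1}$ is a scalar multiple of $\partial_s^i F_s|_{s=0}$, and $s$-differentiation at $s=0$ preserves the pointwise identity $F_s(h_z(p)) = z\,F_s(p)$.

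The crux will be the observation that whenever $S : P \to \mathbb{R}$ is $1$-homogeneous, the section $p \mapsto (p, d_p S)$ of $T^*P$ intertwines the $\mathbb{R}^\times$-actions on base and total space, i.e.\
\[
T^*h_z\bigl(p,\, d_p S\bigr) \;=\; \bigl(h_z(p),\, d_{h_z(p)} S\bigr).
\]
In the coordinates $(x,t)$ on $P$, this reduces to differentiating $S(x,zt) = z\,S(x,t)$ separately in $x$ and in $t$: one gets $\partial_x S|_{(x,zt)} = z\,\partial_x S|_{(x,t)}$ and $\partial_t S|_{(x,zt)} = \partial_t S|_{(x,t)}$, which is precisely the fiber transformation $(\xi_x,\xi_t) \mapsto (z\xi_x, \xi_t)$ in the excerpt's formula for $T^*h_z$. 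This coordinate check is the one place where the specific convention used for the cotangent lift has to be matched against the natural transformation law for the differential of a $1$-homogeneous function, and it is where I expect any potential sign or convention mismatch to surface.

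With this equivariance in hand, the homogeneity of $F_s$ follows by chasing definitions:
\[
F_s(h_z(p)) = \hat H\bigl(\alpha(h_z(p), d_{h_z(p)} S_s^{(i)})\bigr) = \hat H\bigl(\alpha(T^*h_z(p, d_p S_s^{(i)}))\bigr) = \hat H\bigl(h_z(\alpha(p, d_p S_s^{(i)}))\bigr) = z\,F_s(p),
\]
invoking the intertwining relation above in the second equality, the equivariance $h_z\circ\alpha = \alpha\circ T^*h_z$ in the third, and the $1$-homogeneity of $\hat H$ in the fourth. Applying $\frac{1}{(i+1)!}\partial_s^i|_{s=0}$ to both sides of $F_s(h_z(p)) = z\,F_s(p)$ yields $S_{i+1}(h_z(p)) = z\,S_{i+1}(p)$, closing the induction. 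Apart from the coordinate verification in the middle paragraph, every step is forced by the equivariances already built into $\hat H$, $\alpha$ and $T^*h_z$, so I do not anticipate a deeper obstacle.
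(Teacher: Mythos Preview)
Your proof is correct and follows essentially the same route as the paper's: induction on the index, the coordinate verification that the differential of a $1$-homogeneous function is equivariant for the cotangent lift $T^*h_z$, and then the chain of equivariances $dS_s^{(i)} \to \alpha \to \hat H$ to conclude. The only cosmetic difference is that you package the composite as a named function $F_s$ before differentiating in $s$, whereas the paper carries the derivative through each line; the logical content is identical.
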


\begin{proof}
    We proceed by induction on $k$. For $k=1$, we have $S_1(x,t) = H(x,t)$, which is clearly 1-homogeneous. Assume now that $S_1, \dots, S_k$ are all 1-homogeneous. Consider $S_s^{(k)} = \sum_{j=1}^k s^j S_j$; by the induction hypothesis, $S_s^{(k)}$ is also 1-homogeneous.  

We claim that $d(S_s^{(k)} \circ h_z) = T^*h_z \circ d S_s^{(k)}$. Indeed, for the $x$-components, one has
\[
    \frac{\partial S_s^{(k)}}{\partial x}(x, zt) = z \frac{\partial S_s^{(k)}}{\partial x}(x, t),
\]
and for the $t$-component, using the chain rule,
\[
    \frac{d}{dt}S_s^{(k)} (x,zt) = z \frac{\partial S^{(k)}_s}{\partial t}(x,zt). 
\]

On the other hand, $\frac{d}{dt}S_s^{(k)} (x,zt) = \frac{d}{dt}\left(zS_s^{(k)} (x,t) \right) = z \frac{\partial S^{(k)}_s}{\partial t}(x,t)$. So,
\[\frac{\partial S_s^{(k)}}{\partial t} (x,zt) =\frac{\partial S_s^{(k)}}{\partial t} (x,t).\]

Finally, $S_{k+1}$ is 1-homogeneous because both $\alpha$ and $dS_s^{(k)}$ are homogeneous:
\begin{align*}
    h_z^* S_{k+1}(x,t) &= \frac{1}{(k+1)!} \frac{d^k}{ds^k}\Big|_{s=0} \hat{H}\Big( \alpha(d_x S_s^{(k)} \circ h_z) \Big)\\
    &= \frac{1}{(k+1)!} \frac{d^k}{ds^k}\Big|_{s=0} \hat{H}\Big( \alpha(T^*h_z \circ d_x S_s^{(k)}) \Big)\\
    &= \frac{1}{(k+1)!} \frac{d^k}{ds^k}\Big|_{s=0} \hat{H}\Big( h_z \circ \alpha(d_x S_s^{(k)}) \Big)\\
    &= z \frac{1}{(k+1)!} \frac{d^k}{ds^k}\Big|_{s=0} \hat{H}\Big( \alpha(d_x S_s^{(k)}) \Big)\\
    &= z S_{k+1}(x,t), 
\end{align*}
which concludes the proof.
\end{proof} 

This shows that the same algorithm constructed in \cite{cosserat2023numerical} can be directly applied in the Jacobi Hamiltonian Integrator framework to build the numerical method.

\subsection{Structure-preserving properties}
\label{sec:qualitative}

Having constructed the Jacobi Hamiltonian Integrator, we now analyze its qualitative and structure-preserving properties, including high-order accuracy, backward error behavior, and homogeneity. A central role is played by the Magnus expansion, which provides a systematic framework for representing the flow as a single exponential of a Hamiltonian vector field and for constructing higher-order integrators.

\subsubsection{Magnus expansion}

For time-dependent Hamiltonians, it is often advantageous to represent the flow map as an exponential of a single Hamiltonian vector field. The Magnus expansion \cite{BLANES2009151,Magnus1954,JAOteo_1991} provides such a representation, capturing the geometric structure of the dynamics through nested Poisson brackets.

Let $(M, \omega_\mathrm{can})$ be a canonical symplectic manifold with coordinates $x=(q^i,p_i)$, and let the Poisson bracket of two smooth functions $f,g \in C^\infty(M)$ be defined by
\[
\{f,g\} = \frac{\partial f}{\partial q^i}\frac{\partial g}{\partial p_i} - \frac{\partial f}{\partial p_i}\frac{\partial g}{\partial q^i}.
\]
The Hamiltonian vector field associated with $f$ acts on functions as $X_f g = \{f,g\}$. Nested Poisson brackets are denoted by
\[
\mathrm{ad}_f^0 g = g, \quad
\mathrm{ad}_f^n g = \{f, \mathrm{ad}_f^{n-1} g\}, \quad n\ge 1,
\]
with the corresponding Lie series
\[
\mathrm{Ad}_f g = \exp(\mathrm{ad}_f) g = \sum_{n=0}^\infty \frac{1}{n!} \mathrm{ad}_f^n g.
\]

For a time-dependent Hamiltonian $H(x,t)$ with flow $\xi(t)$ satisfying $\dot{\xi}(t) = X_H(\xi(t),t)$, the Magnus expansion seeks a representation of the flow operator $M(t)$ as
\[
M(t) = \mathrm{Ad}_{\Omega(t)} = \exp(\mathrm{ad}_{\Omega(t)}),
\]
where $\Omega(t)$ is a time-dependent Hamiltonian function determined recursively from the nested Poisson brackets of $H$. Truncating the series at any order provides an approximation of the flow as a single exponential of a Hamiltonian vector field, ensuring that each truncated Magnus integrator defines a symplectic map. This makes the Magnus expansion a natural tool for constructing high-order geometric integrators.

\begin{remark}
For a given step size $\epsilon$, the operator $\mathcal{M}_\epsilon(H) = \Omega(\epsilon)$ is called the \emph{Magnus series} of $H$ and encodes all higher-order corrections due to the non-commutativity of the time-dependent Hamiltonian.
\end{remark}

\begin{thm}[Magnus \cite{Magnus1954}]
The solution of
\[
\dot M = M X_{H_0}, \qquad M(0) = I,
\]
can be expressed as
\[
M(t) = \mathrm{Ad}_{\Omega(t)},
\]
where
\[
\dot\Omega = \frac{\mathrm{ad}_{\Omega}}{\mathrm{Ad}_{\Omega}-1} H_0 = \sum_{n=0}^{\infty} \frac{B_n}{n!} \mathrm{ad}_{\Omega}^n H_0, 
\qquad \Omega(0) = 0,
\]
and $B_n$ are the Bernoulli numbers.
\end{thm}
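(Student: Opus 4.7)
The plan is to use the ansatz $M(t)=\exp(\mathrm{ad}_{\Omega(t)})$ with $\Omega(0)=0$, which guarantees $M(0)=I$ automatically, and then to derive an ODE for $\Omega$ equivalent to the operator ODE for $M$. First I would differentiate the ansatz using the classical formula for the derivative of the exponential of a time-dependent operator $A(t)$,
\[
\frac{d}{dt}\,e^{A(t)} \;=\; \frac{e^{\mathrm{ad}_A}-1}{\mathrm{ad}_A}\,\dot A\;\cdot\; e^{A(t)},
\]
specialized to $A(t)=\mathrm{ad}_{\Omega(t)}$ and $\dot A=\mathrm{ad}_{\dot\Omega}$. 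The right-hand side is to be interpreted as a formal power series in $\mathrm{ad}$ (the function $\tfrac{e^z-1}{z}=\sum_{n\ge 0}z^n/(n+1)!$ has no singularity at $0$).

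The second step is to translate this operator identity into a scalar identity on functions. By the Jacobi identity for $\{\,,\,\}$, the map $f\mapsto\mathrm{ad}_f=X_f$ is a morphism of Lie algebras from $C^\infty(M)$ to the derivations of $C^\infty(M)$, so nested commutators satisfy $\mathrm{ad}_{\mathrm{ad}_\Omega}^n\,\mathrm{ad}_{\dot\Omega}=\mathrm{ad}_{\,\mathrm{ad}_\Omega^n\dot\Omega}$. Summing the series then rewrites the operator coefficient as $\mathrm{ad}$ of a single function, namely $\Psi:=\tfrac{e^{\mathrm{ad}_\Omega}-1}{\mathrm{ad}_\Omega}\dot\Omega$. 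Imposing the given ODE $\dot M=M\,X_{H_0}$, using $X_{H_0}=\mathrm{ad}_{H_0}$, and cancelling the outer $\mathrm{ad}$ (which is injective modulo additive constants on the relevant space) yields the scalar equation
\[
\frac{e^{\mathrm{ad}_\Omega}-1}{\mathrm{ad}_\Omega}\,\dot\Omega \;=\; H_0.
\]

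To finish, I would invert the formal power series $\phi(z)=(e^z-1)/z$, which has constant term $1$ and is therefore invertible; its inverse is the Bernoulli generating function $z/(e^z-1)=\sum_{n\ge 0} B_n\,z^n/n!$. Applying this inverse gives
\[
\dot\Omega \;=\; \frac{\mathrm{ad}_\Omega}{\mathrm{Ad}_\Omega-1}\,H_0 \;=\; \sum_{n=0}^{\infty}\frac{B_n}{n!}\,\mathrm{ad}_\Omega^n H_0, \qquad \Omega(0)=0,
\]
which is precisely the claim. The ODE is well-posed as a formal power series in $t$ because at each order the coefficient of $\Omega$ is a polynomial in nested Poisson brackets of $H_0$; convergence of the resulting series on a small time interval, and hence the existence of a genuine (non-formal) solution, follows from the classical Magnus estimates recalled in \cite{BLANES2009151,JAOteo_1991}.

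The main technical obstacle is the passage from the operator identity to the scalar identity in step two: one must carefully justify that the formal-series operator $\tfrac{e^{\mathrm{ad}}-1}{\mathrm{ad}}$ acting on derivations descends along the Lie morphism $f\mapsto\mathrm{ad}_f$ to the corresponding operator on functions, and that cancelling the outer $\mathrm{ad}$ is legitimate. Once these algebraic manipulations are secured, the remainder reduces to the Bernoulli series inversion together with standard ODE existence theory.
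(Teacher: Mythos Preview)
The paper does not prove this theorem: it is stated with a citation to Magnus's original 1954 paper and to the surveys \cite{BLANES2009151,JAOteo_1991}, and no argument is given in the text. Your sketch is therefore not being compared against anything the authors wrote; rather, you have supplied the standard proof that they chose to omit.

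Your outline is the classical one and is essentially correct: differentiate the exponential ansatz via the Duhamel/dexp formula, use that $f\mapsto\mathrm{ad}_f$ is a Lie algebra morphism to push the power series in $\mathrm{ad}_{\mathrm{ad}_\Omega}$ down to a power series in $\mathrm{ad}_\Omega$ acting on functions, and invert $(e^z-1)/z$ to obtain the Bernoulli generating function. One point to be careful with is the left/right ordering. The paper's ODE is $\dot M = M\,X_{H_0}$, so it is $M^{-1}\dot M$ that must equal $\mathrm{ad}_{H_0}$; your dexp identity as written computes $\dot M\,M^{-1}$. Passing from one to the other introduces a conjugation by $M^{-1}=e^{-\mathrm{ad}_\Omega}$, which replaces $(e^{\mathrm{ad}_\Omega}-1)/\mathrm{ad}_\Omega$ by $(1-e^{-\mathrm{ad}_\Omega})/\mathrm{ad}_\Omega$ and flips the sign of the $B_1$ term after inversion. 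This is purely a convention issue (and in fact the statement in the paper is itself slightly loose on this point), but you should make the bookkeeping explicit so that the final series matches whichever convention is intended. The other caveat you already flag---cancelling the outer $\mathrm{ad}$ is only valid modulo Casimirs/constants---is the right thing to note; it causes no trouble here since $\Omega$ is determined only up to such elements anyway.
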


The Magnus expansion provides a hierarchy of approximations: each successive term accounts for the non-commutativity of the time-dependent Hamiltonian. Truncation at order $k$ directly controls the local accuracy of the integrator and ensures the preservation of the underlying symplectic or Jacobi structure.

\subsubsection{Backward error analysis}

Backward error analysis provides a rigorous framework to study the long-time behavior of geometric integrators. Rather than estimating the global error directly, one constructs a \emph{modified Hamiltonian system} whose exact flow coincides with the numerical method. This approach clarifies how geometric structures are preserved by the integrator \cite{Hairer2006N}.

For Jacobi Hamiltonian Integrators, there is a one-to-one correspondence with homogeneous Poisson Hamiltonian Integrators (PHI) \cite{araujo2025jacobihamiltonianintegrators}, which allows the backward error analysis to be performed in the homogeneous Poisson setting. This simplifies the construction of the modified Hamiltonian while preserving the geometric structure.

\begin{definition}
A \emph{numerical integrator} on a manifold $P$ is a smooth family of diffeomorphisms 
\[
\varphi = (\varphi_\epsilon)_{\epsilon \in I}, 
\]
where $I \subset \mathbb{R}$ is an interval containing $0$, such that the map $\epsilon \mapsto \varphi_\epsilon(x)$ is smooth for all $x \in P$.
\end{definition}

\begin{definition}
The integrator $\varphi$ is said to have \emph{order $k$} with respect to a vector field $X \in \mathfrak{X}(P)$ if
\[
\varphi_\epsilon(x) = \phi_X^\epsilon(x) + o(\epsilon^k),
\]
where $\phi_X^\epsilon$ denotes the exact flow of $X$.
\end{definition}

\begin{definition}
    Given a Poisson (resp. Jacobi) manifold and a Hamiltonian vector field $X_H$, an integrator $\varphi_\epsilon$ of order $k$ for $X_H$ is said to be a Poisson (resp. Jacobi) integrator if $\varphi_\epsilon$ is a Poisson (Jacobi) diffeomorphism.
\end{definition}

\begin{definition}
    Let $X_H$ be a Poisson (resp. Jacobi) Hamiltonian vector field. An integrator of order $k$, $\varphi_\epsilon$, is said to be a Poisson (resp. Jacobi) Hamiltonian integrator for $H$ if:
    \begin{enumerate}
        \item $\varphi_\epsilon$ is a Poisson (resp. Jacobi) diffeomorphism;
        \item there exists a time-dependent Hamiltonian $(H_s)_s$ such that:
        \begin{enumerate}
            \item $H_s-H = o(\epsilon^k)$;
            \item $\varphi_\epsilon = \phi^\epsilon_H + o(\epsilon^k).$
        \end{enumerate}
    \end{enumerate}
\end{definition}

\begin{remark}
    We can also define a homogeneous Poisson Hamiltonian integrator if it is a Poisson Hamiltonian integrator and also preserves the homogeneous structure.
\end{remark}

Let $(P, \Pi, h_z)$ be a homogeneous Poisson manifold and $\hat{H}$ the Poissonized Hamiltonian. Its Hamiltonian vector field $X_{\hat{H}}$ generates the flow $\phi_{\hat{H}}^s$. Given a homogeneous symplectic bi-realization $(\alpha, \beta)$ and the unit map $\sigma$ of the local symplectic groupoid integrating $P$ (having $\alpha$ and $\beta$ as source and target), an exact family of homogeneous Lagrangian bisections $L_s$ induces the Poisson diffeomorphism
\[
\varphi_{L_s} = \beta \circ \alpha^{-1}_{|L_s}.
\]
Choosing the exact Lagrangian bisections 
\[
L_s = \phi^s_{\alpha^* \hat{H}}(\sigma(P))
\] 
ensures that 
\[
\varphi_{L_s} = \phi^s_{\hat{H}}.
\]
Approximating these bisections therefore induces an approximate Hamiltonian flow.

The key result of \cite{COSSERAT2023} shows that if the Magnus series of the variation functions coincides with $\epsilon\hat H$ up to order $k$, then the induced diffeomorphisms define a Poisson Hamiltonian integrator of order $k$. Moreover, the associated modified Hamiltonian can be computed explicitly.

\begin{thm}[\cite{COSSERAT2023}]\label{thm: modified Ham}
Consider a homogeneous symplectic bi-realization $(\mathcal{U},\alpha,\beta)$ of a homogeneous Poisson manifold $(P,\Pi,h_z)$ and a homogeneous Hamiltonian $\hat H$. Let $(S_i)_{i\in\mathbb{N}}$ be the generating functions defined in Section~\ref{sec: explicit construction}. Then the Poisson automorphisms associated with the Lagrangian bisections $d(S_s^{(k)})$ define a homogeneous Poisson Hamiltonian integrator of order $k$ for $\hat H$, with homogeneous modified Hamiltonian $\hat{H}_s$ satisfying
\[
\mathcal{M}_\epsilon(\hat{H}_s)=\epsilon\hat H + o(\epsilon^k).
\]
\end{thm}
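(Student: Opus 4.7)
The plan is to leverage the non-homogeneous Poisson case proved in \cite{COSSERAT2023} and then upgrade the conclusion by checking that homogeneity propagates through every step. Concretely, the argument splits into three pieces: (i) the maps $\varphi_{L_s}$ are homogeneous Poisson diffeomorphisms; (ii) they approximate the exact flow $\phi^s_{\hat H}$ to order $k$; (iii) there is a homogeneous modified Hamiltonian $\hat H_s$ whose Magnus series agrees with $\epsilon\hat H$ up to $o(\epsilon^k)$.

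For (i), I would invoke the standard groupoid-theoretic fact that any Lagrangian bisection $L$ of a local symplectic groupoid $\Sigma\rightrightarrows P$ defines a Poisson automorphism $\varphi_L=\beta\circ(\alpha|_L)^{-1}$ of $(P,\Pi)$; applying this to $L_s=dS_s^{(k)}$ gives the Poisson property. Homogeneity then follows from Theorem~\ref{thm: S_i homogeneous}: since $S_s^{(k)}$ is $1$-homogeneous, its differential satisfies $dS_s^{(k)}\circ h_z=T^*h_z\circ dS_s^{(k)}$, and combining this with the equivariance relations $h_z\circ\alpha=\alpha\circ T^*h_z$ and $h_z\circ\beta=\beta\circ T^*h_z$ yields $\varphi_{L_s}\circ h_z=h_z\circ\varphi_{L_s}$.

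For (ii), the key observation is that the exact Hamiltonian flow is itself of bisection form: $\phi^s_{\hat H}=\varphi_{L_s^{\mathrm{exact}}}$ with $L_s^{\mathrm{exact}}=\phi^s_{\alpha^*\hat H}(\sigma(P))$, and locally these bisections are generated by a function $S_s^{\mathrm{exact}}$ solving the Hamilton--Jacobi equation
\[
\partial_s S_s^{\mathrm{exact}}(x)=\hat H\bigl(\alpha(d_xS_s^{\mathrm{exact}})\bigr),\qquad S_0^{\mathrm{exact}}=0.
\]
I would then verify by induction on $i$ that the $S_i$ produced in step~2 of Section~\ref{sec: explicit construction} coincide with the Taylor coefficients $\frac{1}{i!}\partial_s^i|_{s=0}S_s^{\mathrm{exact}}$; the inductive step is precisely the defining formula for $S_{i+1}$ combined with $i$ differentiations of the Hamilton--Jacobi equation at $s=0$. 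This yields $S_s^{(k)}=S_s^{\mathrm{exact}}+o(s^k)$, hence $\varphi_{L_s^{(k)}}=\phi^s_{\hat H}+o(s^k)$. For (iii), I would run the Hamilton--Jacobi correspondence backwards: for small $s$ there exists a time-dependent Hamiltonian $\hat H_s$ whose exact flow has $S_s^{(k)}$ as generating family, i.e.\ $\partial_s S_s^{(k)}(x)=\hat H_s(\alpha(d_xS_s^{(k)}))$. Homogeneity of $\hat H_s$ follows from that of $S_s^{(k)}$ and $\alpha$, together with the uniqueness of the reconstruction, and representing the flow of $\hat H_s$ as a single exponential gives $\mathcal{M}_\epsilon(\hat H_s)=\epsilon\hat H+o(\epsilon^k)$ from the order-$k$ agreement in (ii).

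The main obstacle will be step (ii), specifically the combinatorial bookkeeping that identifies the truncated recursion with the Taylor expansion of the Hamilton--Jacobi solution. One must show that replacing $S_s^{\mathrm{exact}}$ by the polynomial truncation $S_s^{(i)}$ inside $\hat H(\alpha(d_xS_s))$ does not affect the $i$-th $s$-derivative at $s=0$, which amounts to verifying that the missing tail $S_s^{\mathrm{exact}}-S_s^{(i)}=O(s^{i+1})$ contributes only to derivatives of order $\geq i+1$; this is a careful chain-rule argument that also underlies the order $k$ claim in \cite{COSSERAT2023}. Once this identification is in place, the homogeneity upgrade is automatic from Theorem~\ref{thm: S_i homogeneous} and the equivariance of $\alpha,\beta$, and the Magnus identity in (iii) is a formal consequence of the order-$k$ agreement together with injectivity of $\hat H_s\mapsto\mathcal{M}_\epsilon(\hat H_s)$ modulo $o(\epsilon^k)$.
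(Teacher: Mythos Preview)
Your proposal is sound and matches the paper's own logic. In fact the paper does not supply a self-contained proof of this theorem: it is stated as a citation from \cite{COSSERAT2023}, with the ``homogeneous'' adjectives justified by the earlier Theorem~\ref{thm: S_i homogeneous} and the equivariance of $\alpha,\beta$. Your three-step decomposition---Poisson diffeomorphism property from the Lagrangian bisection formalism, order-$k$ accuracy from the Hamilton--Jacobi Taylor matching, and the modified Hamiltonian via the Magnus series---is exactly how \cite{COSSERAT2023} proceeds in the non-homogeneous case, and your homogeneity upgrade (using $dS_s^{(k)}\circ h_z=T^*h_z\circ dS_s^{(k)}$ together with $h_z\circ\alpha=\alpha\circ T^*h_z$) is precisely the route the present paper takes, only sketched more tersely in the discussion surrounding Theorem~\ref{thm: S_i homogeneous} and in the subsection on homogeneity of Lagrangian bisections.
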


This result demonstrates that Jacobi Hamiltonian Integrators retain the fundamental nature of classical geometric integrators: rather than approximating the solution of the original system, they provide the exact flow of a \emph{modified Hamiltonian system} (cf. Theorem~\ref{thm: modified Ham}). Consequently, the numerical method inherits the key structural properties of Hamiltonian dynamics.

In particular, the integrator preserves the Jacobi structure, the level sets of the Poissonized Hamiltonian (and therefore the correct dissipation of the Jacobi Hamiltonian, see the next subsection), and the Jacobi Casimir functions up to the order of the method:

\begin{thm}[Structure Preservation]
Jacobi Hamiltonian Integrators of order $k$ preserve the homogeneous Poisson structure, the Poissonized Hamiltonian level sets, and the Jacobi Casimir functions up to order $k$.
\end{thm}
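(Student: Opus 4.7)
The plan is to read off all three preservation statements from Theorem~\ref{thm: modified Ham} combined with Theorem~\ref{thm: S_i homogeneous}. The former guarantees that the integrator $\varphi_\epsilon$ is an exact homogeneous Poisson diffeomorphism that agrees up to $o(\epsilon^k)$ with the Hamiltonian flow of a modified Hamiltonian $\hat H_s$ whose Magnus series satisfies $\mathcal{M}_\epsilon(\hat H_s)=\epsilon\hat H+o(\epsilon^k)$. With this in hand, each of the three preservation claims reduces to either a structural identity for exact Hamiltonian flows or a purely algebraic $o(\epsilon^k)$ bookkeeping argument.

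First I would address the \emph{exact} preservation of the homogeneous Poisson structure. The JHI is the Poisson automorphism $\varphi_\epsilon=\beta\circ\alpha^{-1}_{|L_s}$ induced by the Lagrangian bisection $L_s=\operatorname{graph}(dS_s^{(k)})$, and Theorem~\ref{thm: S_i homogeneous} ensures $S_s^{(k)}$ is 1-homogeneous, so $T^*h_z$ leaves $L_s$ invariant. Because the bi-realization $(\alpha,\beta)$ intertwines $h_z$ and $T^*h_z$, the map $\varphi_\epsilon$ commutes with $h_z$ and is a Poisson diffeomorphism. This gives exact preservation of both $\Pi$ and the homogeneity action.

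Next, for the Poissonized Hamiltonian level sets, I would exploit the Magnus representation. The single exponential $\varphi_\epsilon=\exp(\operatorname{ad}_{\mathcal{M}_\epsilon(\hat H_s)})$ preserves the effective Hamiltonian $\mathcal{M}_\epsilon(\hat H_s)$ exactly, and the identity $\mathcal{M}_\epsilon(\hat H_s)=\epsilon\hat H+o(\epsilon^k)$ then yields
\[
\hat H\circ\varphi_\epsilon-\hat H = \tfrac{1}{\epsilon}\bigl(\mathcal{M}_\epsilon(\hat H_s)-\epsilon\hat H\bigr)\circ\varphi_\epsilon-\tfrac{1}{\epsilon}\bigl(\mathcal{M}_\epsilon(\hat H_s)-\epsilon\hat H\bigr)=o(\epsilon^k),
\]
so the level sets of $\hat H$ are preserved to the claimed order. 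Finally, for Jacobi Casimirs, I would use that under Poissonization a Jacobi Casimir of $(J,\Lambda,E)$ corresponds to a 1-homogeneous Poisson Casimir $C$ of $(P,\Pi)$; any such $C$ is annihilated by every Hamiltonian vector field and is therefore preserved exactly by $\phi^\epsilon_{\hat H_s}$. Combined with $\varphi_\epsilon=\phi^\epsilon_{\hat H_s}+o(\epsilon^k)$ and the smoothness of $C$, this gives $C\circ\varphi_\epsilon-C=o(\epsilon^k)$.

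The main obstacle I anticipate is the identification step linking Jacobi Casimirs on $J$ with homogeneous Poisson Casimirs on $P$ in a way that is truly compatible with the $\mathbb{R}^\times$-action encoded by $h_z$, since the defining relation for Jacobi Casimirs involves both $\Lambda$ and the Reeb-like vector field $E$. Once that correspondence is pinned down carefully, and once one is comfortable using the Magnus-series representation to replace the time-dependent $\hat H_s$ by the effective Hamiltonian that is genuinely conserved along $\varphi_\epsilon$, the remaining arguments are formal Taylor/$o(\epsilon^k)$ manipulations.
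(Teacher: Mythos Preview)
Your proposal is correct and rests on the same pillar as the paper---Theorem~\ref{thm: modified Ham}---but you unpack it far more carefully than the paper does. The paper's proof is a single sentence: since the modified Hamiltonian coincides with $\hat H$ up to order $k$, its Hamiltonian flow preserves all associated geometric invariants to the same order. That is the whole argument.

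Your breakdown is more informative. You separate out the \emph{exact} preservation of the homogeneous Poisson structure (coming from the fact that $\varphi_\epsilon$ is by construction a Poisson automorphism commuting with $h_z$, independent of any order-$k$ approximation) from the order-$k$ preservation of $\hat H$ and of Casimirs; the paper's one-liner blurs this distinction. Your explicit use of the Magnus representation to turn conservation of $\mathcal{M}_\epsilon(\hat H_s)$ into an $o(\epsilon^k)$ bound on $\hat H\circ\varphi_\epsilon-\hat H$ is also a genuine addition, making quantitative what the paper leaves implicit. The price is that you introduce the Jacobi-Casimir $\leftrightarrow$ homogeneous-Poisson-Casimir correspondence as an obstacle; the paper simply does not engage with this, treating ``Jacobi Casimir functions'' operationally as Casimirs of the Poissonized structure (as in the examples of Sections~\ref{sec: exe 3D} and~\ref{sec: exe 4D}). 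So the obstacle you anticipate is real in principle but is sidestepped rather than resolved in the paper.

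One minor point: in your Casimir argument you fall back to $\varphi_\epsilon=\phi^\epsilon_{\hat H_s}+o(\epsilon^k)$, but since $\varphi_\epsilon$ \emph{is} the exact Hamiltonian flow of the modified Hamiltonian (not merely an approximation to it), Poisson Casimirs are in fact preserved exactly, not only to order $k$. This strengthens the conclusion beyond both your stated version and the paper's.
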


\begin{proof}
Since the modified Hamiltonian coincides with $\hat H$ up to order $k$, its Hamiltonian flow preserves all associated geometric invariants to the same order.
\end{proof}

\subsubsection{Dissipation of energy in Jacobi Hamiltonian systems}

The Poissonization of a Jacobi structure $(\Lambda,E)$ gives the homogeneous Poisson structure \eqref{poisson_structure} defined on the extended manifold with coordinate $t\in\mathbb{R}^\times$. The Jacobi Hamiltonian vector field for a Hamiltonian $H$ is \eqref{JH_vectorfield}, and a homogeneous Hamiltonian for $\Pi$ inducing the Jacobi dynamics is $\hat H = tH$.

The Hamiltonian vector field of $\hat H$ is
\begin{align*}
X_{\hat H} 
&= \Pi(\cdot,d\hat H) \\
&= \frac{1}{t}\Lambda(\cdot,d\hat H) + \frac{\partial}{\partial t}\wedge E(\cdot,d\hat H) \\
&= \Lambda(\cdot,dH) - H E(\cdot) + tE(H)\frac{\partial}{\partial t} \\
&= X_H + tE(H)\frac{\partial}{\partial t}.
\end{align*}
Hence, the Jacobi Hamiltonian dynamics on $(\Lambda,E)$ is precisely the projection of the homogeneous Poisson Hamiltonian flow associated with $\hat H$. 

In general, $H$ is not preserved along Jacobi trajectories, since $X_H(H)=-H E(H)$. Conservation occurs only if $E(H)=0$, automatically satisfied in the Poisson case. Lifting to the homogeneous Poisson system provides a geometric explanation
\[
X_{\hat H}(\hat H) = X_H(tH) + tE(H)\frac{\partial}{\partial t}(tH) = -tH E(H) + tH E(H) = 0.
\]

This shows that the energy variation observed along Jacobi trajectories corresponds to an exchange with the scale variable $t$ in the homogeneous Poisson dynamics. In other words, the apparent dissipation of $H$ arises from projecting a conservative Hamiltonian flow onto a fixed slice of the dilation variable $t$. From this viewpoint, Jacobi dynamics can be interpreted as conservative Hamiltonian motion modulo dilations, with the evolution of $H$ measuring the redistribution of energy between the original system and the scale direction generated by the Reeb vector field $E$.

Consequently, since JHIs preserve the homogeneous Hamiltonian $\hat H$, they reproduce the correct dissipation behaviour of the original Jacobi Hamiltonian flow.

\subsubsection{Homogeneity of Lagrangian bisections}

We now examine the homogeneous structure of the Lagrangian bisections that arise in the iterative construction of the integrator. According to Theorem~\ref{thm: S_i homogeneous}, the Lagrangian bisection approximating the exact homogeneous Lagrangian bisection to order $k$ is given by the embedding
\[
\tilde{L}_k : y \mapsto \Big(y, \sum_{i=1}^k \Delta s^i \nabla S_i(y) \Big),
\]
and is homogeneous. More precisely, from the proof of the theorem, the generating function satisfies
\[
\frac{\partial S_s^{(k)}}{\partial x}(x, zt) = z \frac{\partial S_s^{(k)}}{\partial x}(x,t), \qquad 
\frac{\partial S_s^{(k)}}{\partial t}(x, zt) = \frac{\partial S_s^{(k)}}{\partial t}(x,t),
\]
for all $z \in \mathbb{R}^\times$.

Expressed in coordinates $(x,t)$, the embedding reads
\[
\tilde{L}_k(x,t) = \Big(x, t, \sum_{i=1}^k \Delta s^i \nabla_x S_i(x,t), \sum_{i=1}^k \Delta s^i \nabla_t S_i(x,t) \Big),
\]
and satisfies the homogeneity condition
\begin{align*}
(\tilde{L}_k \circ h_z)(x,t) 
&= \Big(x, t, \sum_{i=1}^k \Delta s^i \nabla_x S_i(x, zt), \sum_{i=1}^k \Delta s^i \nabla_t S_i(x, zt) \Big)\\
&= \Big(x, zt, z \sum_{i=1}^k \Delta s^i \nabla_x S_i(x,t), \sum_{i=1}^k \Delta s^i \nabla_t S_i(x,t) \Big)\\
&= (T^* h_z \circ \tilde{L}_k)(x,t),
\end{align*}
showing that $\tilde{L}_k$ indeed preserves the homogeneous structure.

Consequently, the diffeomorphism generated by $\tilde{L}_k$ approximates the exact flow up to order $k$:
\[
\varphi_{\tilde{L}_k} = \phi^s_{\hat{H}} + o(\Delta s^k).
\]
This result confirms that the integrator inherits the homogeneity of the underlying Hamiltonian system and that the error in the flow vanishes faster than $(\Delta s)^k$ as the step size tends to zero.

\section{Numerical examples}\label{sec:numerical}

In this section, we illustrate the construction of Jacobi Hamiltonian Integrators for various examples. The procedure follows the workflow summarized in Figure~\ref{diagram}, which we now explain in detail.

\begin{figure}[h!]
    \centering
    \begin{tikzpicture}[node distance=2cm]
    \node (start) [startstop] {Given $(\Lambda,E)$ and Hamiltonian $H$};
    \node (Poissonization) [process, below of=start, yshift = 0.2cm] {Poissonization: $\Pi = \frac{1}{t}\Lambda + \dfrac{\partial}{\partial t}\wedge E$, $\hat{H}=tH$};
    \node (dec1) [decision, below of=Poissonization] {\begin{tabular}{c}
            Is the homogeneous symplectic\\
            bi-realization known?
        \end{tabular} };
    
    \node (pro2a) [process, below of=dec1, yshift=-1.5cm] {Compute $S_i$ until the desired order};
    \node (pro2b) [process, right of=dec1,xshift = 2cm, yshift=-1.75cm] {Compute the symplectic bi-realization};
    \node (final) [startstop, below of=pro2a, yshift = -0.3cm] {\begin{tabular}{l}
            Solve: $\alpha\left(y_n, \sum_{i=1}^k \Delta s^i \nabla S_i(y_n) \right) = x_n$\\
            Update: $x_{n+1}=\beta\left(y_n, \sum_{i=1}^k \Delta s^i \nabla S_i(y_n) \right)$
        \end{tabular}};
    
        \draw [arrow] (start) -- (Poissonization);
        \draw [arrow] (Poissonization) -- (dec1);
        \draw [arrow] (dec1) -- node[anchor=east] {yes} (pro2a);
        \draw [arrow] (dec1) -| node[anchor=south] {no} (pro2b);
        \draw [arrow] (pro2b) |- (pro2a);
        \draw [arrow] (pro2a) -- (final);
    \end{tikzpicture}
    \caption{Flowchart for the construction of Jacobi Hamiltonian Integrators (JHI).}\label{diagram}
\end{figure}
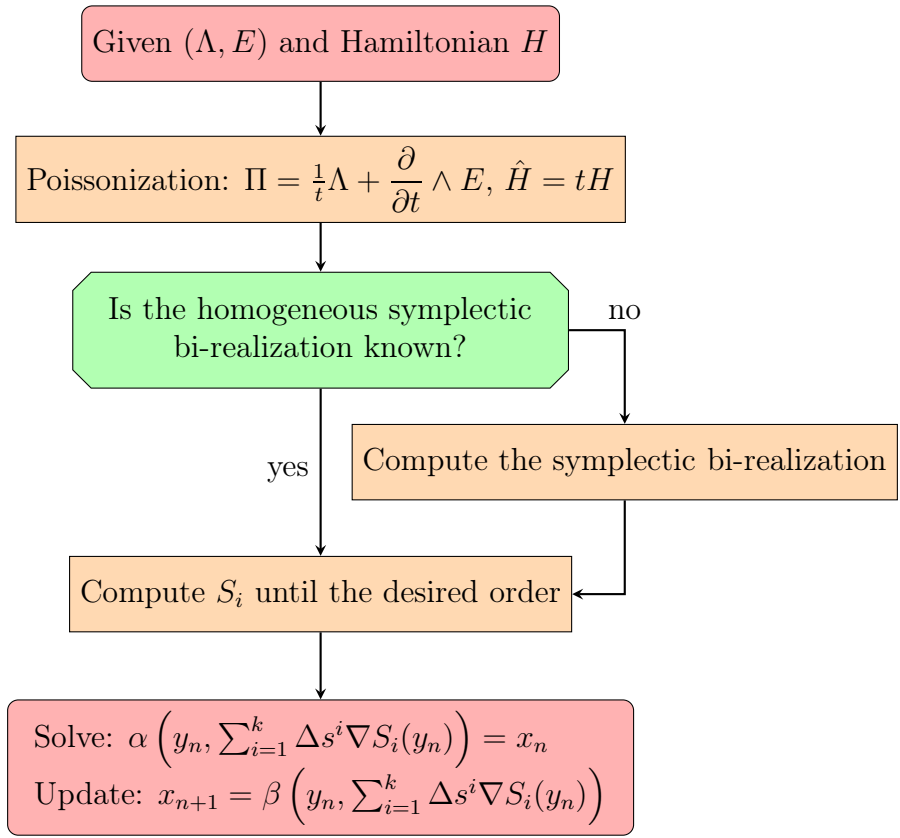

\paragraph{Step 1: Poissonization.} 
Given a Jacobi manifold $(\Lambda, E)$ and a Hamiltonian $H$, we first transform the system into a homogeneous Poisson system by defining
\[
\Pi = \frac{1}{t}\Lambda + \frac{\partial}{\partial t} \wedge E, \qquad \hat{H} = t H,
\]
where $t$ is an auxiliary variable introducing homogeneity. This step allows the use of methods developed for homogeneous Poisson systems.

\paragraph{Step 2: Symplectic bi-realization.} 
Next, we check whether a homogeneous symplectic bi-realization $(\alpha, \beta)$ of the Poisson manifold is already known. If it is not available, it must be computed first. This bi-realization is essential for lifting the Hamiltonian flow to a symplectic space, where the JHI is constructed.

\paragraph{Step 3: Computation of $S_i$.} 
Once the bi-realization is available, we compute the functions $S_i$ recursively up to the desired order $k$, as described in Section~\ref{sec: explicit construction}. These functions encode the Hamiltonian flow in the symplectic space and determine the accuracy of the integrator.

\paragraph{Step 4: Construction of the integrator.} 
Finally, the $n$-th step of the method consists of solving
\[
\alpha\Big(y_n, \sum_{i=1}^k \Delta s^i \nabla S_i(y_n)\Big) = x_n,
\]
for $y_n$, and then updating
\[
x_{n+1} = \beta\Big(y_n, \sum_{i=1}^k \Delta s^i \nabla S_i(y_n)\Big),
\]
which gives the next point of the trajectory. This step completes one iteration of the JHI.

The numerical experiments are organized into three groups.  
The first example (Section~\ref{sec: exe contact}) is devoted to comparing the qualitative behavior of JHI with that of a second-order Runge-Kutta method.  
The examples in Sections~\ref{sec: exe 2D}, \ref{sec: exe 3D}, and \ref{sec: exe 4D} are illustrative cases, designed to illustrate the order of accuracy of the constructed integrators, including the situation in which only an approximate symplectic bi-realization is available.  
Finally, the examples in Sections~\ref{sec: exe Damped}, \ref{sec: exe LV} and \ref{sec: exe RBR} correspond to classical models widely studied in the literature, allowing for a direct comparison with existing numerical approaches.

An interesting feature arises in Section~\ref{sec: exe 2D}: for the chosen Hamiltonian, the associated JHI reproduces the exact solution of the system. In this case, the discrete flow generated by the integrator coincides with the continuous Hamiltonian flow, and no numerical error is introduced

\subsection{Contact Hamiltonian system}\label{sec: exe contact}

Consider a 3-dimensional manifold with coordinates $(q,p,z)$. The canonical contact structure \cite{Simoes_2020} is defined by
\begin{align}\label{eq: contact bivetor field}
    \Lambda = \frac{\partial}{\partial p}\wedge \frac{\partial}{\partial q} + p\,\frac{\partial}{\partial p}\wedge \frac{\partial}{\partial z}, \qquad 
    E = -\frac{\partial}{\partial z}.
\end{align}

For a Hamiltonian $H$, the associated Hamiltonian vector field is
\begin{align*}
    X_H = -\frac{\partial H}{\partial p}\frac{\partial}{\partial q} 
          + \left(\frac{\partial H}{\partial q} + p \frac{\partial H}{\partial z}\right)\frac{\partial}{\partial p} 
          + \left(H - p \frac{\partial H}{\partial p}\right)\frac{\partial}{\partial z}.
\end{align*}

The associated Poisson (symplectic) structure is 
\begin{align}\label{eq: poisson - contact}
    \Pi = \frac{1}{t}\frac{\partial}{\partial p}\wedge \frac{\partial}{\partial q} + \frac{p}{t}\frac{\partial}{\partial p}\wedge \frac{\partial}{\partial z} - \frac{\partial}{\partial t}\wedge \frac{\partial}{\partial z}.
\end{align}

\paragraph{Construction of a first-order JHI.} Following the flowchart in Figure~\ref{diagram}, the first step is to \emph{Poissonize} the Jacobi manifold. Here, the original contact manifold $(\Lambda,E)$ and Hamiltonian $H$ are lifted to the homogeneous Poisson manifold $(\Pi, \hat{H})$ with 
\[
\Pi = \frac{1}{t}\Lambda + \frac{\partial}{\partial t}\wedge E, \qquad \hat{H} = t H.
\]

Next, the diagram asks whether a homogeneous symplectic bi-realization $(\alpha,\beta)$ is known. In this example, constructing it directly through Poisson spray is difficult due to the $p$-dependence in \eqref{eq: poisson - contact}. To overcome this, we introduce the symplectomorphism
\[
F(q,p,z,t) = (Q,P,Z,T) = (q, pt, -z, t),
\]
which transforms $\Pi$ into the canonical symplectic structure
\[
F_*\Pi = \frac{\partial}{\partial P}\wedge \frac{\partial}{\partial Q} + \frac{\partial}{\partial T}\wedge \frac{\partial}{\partial Z}.
\]

Using the known canonical bi-realization \cite{cosserat2023numerical}, we transport it back to the original coordinates via $F$. This is illustrated by the diagram:
\begin{center}
\begin{tikzpicture}
    \node at (0,0) (a) {$(M_1,\Pi)$};
    \node at (0,-2) (b) {$T^*M_1$};
    \node at (3,0) (c) {$(M_2,\omega_\text{can}^{-1})$};
    \node at (3,-2) (d) {$T^*M_2$};
    \draw [->] (b) to node[left] {$\alpha$} (a);
    \draw [->] (a) to node[above] {$F$} (c);
    \draw [->] (d) to node[right] {$\alpha_\text{can}$} (c);
    \draw [->] (b) to node[above] {$(F^{-1})^*$} (d);
\end{tikzpicture}
\end{center}

This yields the explicit formulas:
\begin{equation}\label{eq: alpha contact}
\begin{aligned}
\alpha(x,\xi_x) &= F^{-1}\circ \alpha_\mathrm{can} \circ (F^{-1})^* (x,\xi_x) 
= \left(q-\frac{\xi_p}{2t},\ \frac{tp+\frac{\xi_q}{2}}{t-\frac{\xi_z}{2}}, z+\frac{\xi_t}{2} - \frac{p\xi_p}{2t}, t-\frac{\xi_z}{2}\right), \\
  \ \beta(x,\xi_x) &= \alpha(x,-\xi_x).
\end{aligned}
\end{equation}

Following the next steps of the flowchart, once $(\alpha,\beta)$ is known, we compute the first Taylor coefficient $S_1(x) = \hat{H}(x)$. Then the first-order Jacobi Hamiltonian Integrator is
\begin{enumerate}
    \item Solve for $y_n$:
    \[
        \alpha\big(y_n, \Delta s \nabla S_1(y_n)\big) = x_n,
    \]
    \item Update the solution:
    \[
        x_{n+1} = \beta\big(y_n, \Delta s \nabla S_1(y_n)\big),
    \]
\end{enumerate}
where $\Delta s$ is the time step.

\paragraph{Numerical example.} 
Consider the Hamiltonian $H(q,p,z) = q + z$ with initial condition $x_0 = (0.1, -1.1, 0.09)$. Over the time interval $[0,20]$, the $p$ and $z$ coordinates grow rapidly. Figure~\ref{fig: contact singularity} compares the first-order JHI (using both exact and approximate bi-realizations) with a classical second-order Runge--Kutta method ($\Delta s = 0.1$). Despite being first-order, the JHI delays the blow-up compared to RK-2, illustrating the benefit of geometric structure preservation.
\begin{figure}[h!]
\centering
\includegraphics[width=1\textwidth]{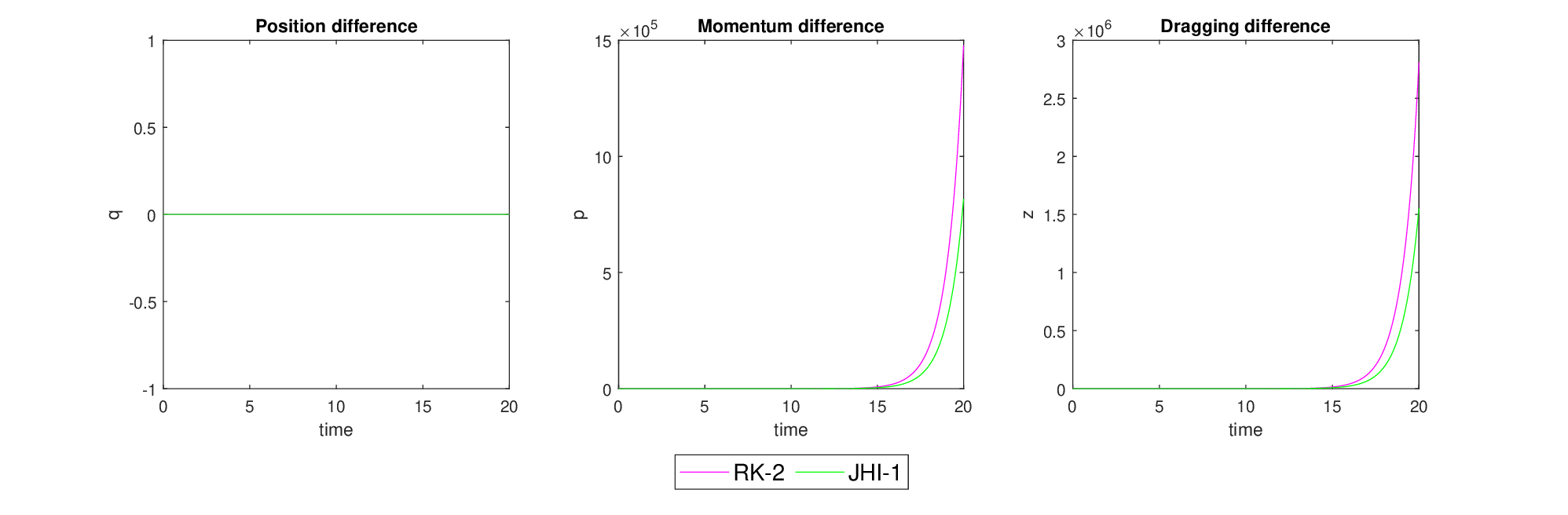}
\caption{Comparison between JHI and RK-2 with the exact solution for the contact Hamiltonian system.}
\label{fig: contact singularity}
\end{figure}

\subsection{A two-dimensional Jacobi Hamiltonian problem}\label{sec: exe 2D}

We consider a 2D Jacobi structure on $\mathbb{R}^2$
\[
\Lambda = (x^2+y^2)\frac{\partial}{\partial x}\wedge \frac{\partial}{\partial y}, \qquad 
E = 2x\frac{\partial}{\partial y} - 2y\frac{\partial}{\partial x}.
\]

The associated Hamiltonian vector field for a Hamiltonian $H(x,y)$ is
\[
X_H = \left((x^2+y^2)\frac{\partial H}{\partial y}+2yH\right)\frac{\partial}{\partial x} - \left((x^2+y^2)\frac{\partial H}{\partial x}+2xH\right)\frac{\partial}{\partial y}.
\]

\begin{remark}
    In dimension 2 any bivector $\Lambda$ automatically satisfies $[\Lambda, \Lambda] = 0 = 2 E \wedge \Lambda$ and together with the equation $[E, \Lambda] = 0$, this means that any 2D Jacobi structure $(\Lambda, E)$ amounts to a Poisson structure $\Lambda$ together with a Poisson vector field $E$.
\end{remark}

\paragraph{Construction of the JHI.}
Following the flowchart in Figure~\ref{diagram}, the homogeneous Poisson structure is
\[
\Pi = \frac{x^2+y^2}{t}\frac{\partial}{\partial x}\wedge \frac{\partial}{\partial y} + 2x\frac{\partial}{\partial t}\wedge \frac{\partial}{\partial y} - 2y\frac{\partial}{\partial t}\wedge \frac{\partial}{\partial x}, 
\qquad \hat{H} = t H(x,y).
\]

A direct construction of a symplectic bi-realization using a Poisson spray \cite{CrainicMarcut2011} is technically cumbersome in this case, due to the nonlinearity and degeneracy of the coefficients. Instead, we exploit the local normal form provided by Weinstein's splitting theorem \cite{Weinstein1983}, which offers a more transparent and geometrically natural approach. 

To this end, we introduce polar coordinates
\[
x=r\cos\theta,\qquad y=r\sin\theta,
\]
so that $r^2 = x^2 + y^2$. In these coordinates, the Poisson structure takes the form 
\[
\Pi=\frac{r^2}{t}\,\partial_r\wedge\partial_\theta
+2r\,\partial_t\wedge\partial_\theta .
\]

A direct computation shows that \(c=r^2/t\) is a Casimir function of $\Pi$. Defining the change of coordinates
\[
F(x,y,t)=(q,p,c)=\left(\theta,-\tfrac{t}{2},\tfrac{r^2}{t}\right)
\]
we obtain the canonical Poisson structure
\[
F_*\Pi=\Pi_{\mathrm{can}}=\partial_q\wedge\partial_p,
\]
with $c$ as a transverse (Casimir) variable.

For \(\Pi_{\mathrm{can}}\), a symplectic bi-realization is given by
\[
\alpha_{\mathrm{can}}(q,p,c,\xi_q,\xi_p,\xi_c)
=
\left(q-\frac{1}{2}\xi_p,\; p+\frac{1}{2}\xi_q,\; c\right),
\quad
\beta_{\mathrm{can}}(q,p,c,\xi_q,\xi_p,\xi_c)
=
\left(q+\frac{1}{2}\xi_p,\; p-\frac{1}{2}\xi_q,\; c\right).
\]

Transporting it back yields the bi-realization
\[
\alpha(X,\xi)=(F^{-1}\circ\alpha_{\mathrm{can}}\circ (F^{-1})^*)(X,\xi),
\qquad
\beta(X,\xi)=\alpha(X,-\xi),
\]
associated with the original Poisson structure $\Pi$.

\paragraph{Numerical examples.}  
We first compute trajectories for the Hamiltonian
\[
H(x,y)=x^2+y^2
\]
with initial condition \(x_0=(1,1)\), time step \(\Delta s=0.03\), and time interval \([0,\pi]\).
As shown in Figure~\ref{fig:Jacobi2D}, the JHI trajectory coincides with the exact solution and exhibits better qualitative behavior than the second-order Runge--Kutta (RK-2) method.

\begin{figure}[!ht]
\centering
\includegraphics[width=0.7\textwidth]{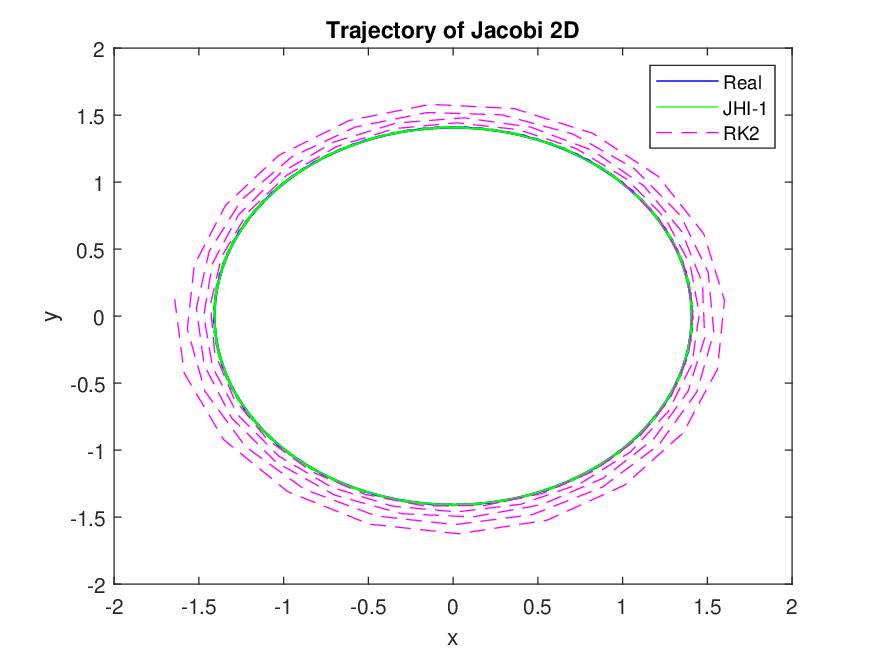}
\caption{Comparison between the exact trajectory and those obtained by the JHI and RK-2 methods for the Hamiltonian \(H=x^2+y^2\).}
\label{fig:Jacobi2D}
\end{figure}

In fact, this Hamiltonian leads to an exact JHI scheme. Indeed, since
\[
\hat H(\alpha(dS^{(1)}))=t(x^2+y^2)
\]
is independent of the parameter \(s\), we have
\[
S_2=\frac{1}{2}\left.\frac{d}{ds}\right|_{s=0}\hat{H}(\alpha(dS^{(1)}))=0,
\]
and recursively \(S_{k}=0\) for all \(k\ge2\). Consequently, the JHI reproduces the exact flow of the Hamiltonian vector field.

We now consider a non-quadratic Hamiltonian,
\[
H(x,y)=\cos(x)\sin(y),
\]
and compute trajectories on the interval \([0,\pi]\) with time step \(\Delta s=0.1\) and initial condition \(x_0=(1,1)\).
Figure~\ref{fig:Jacobi2D2} shows that the first-order JHI still provides a more accurate qualitative approximation than RK-2.

\begin{figure}[!ht]
	\centering
	\includegraphics[width=0.7\textwidth]{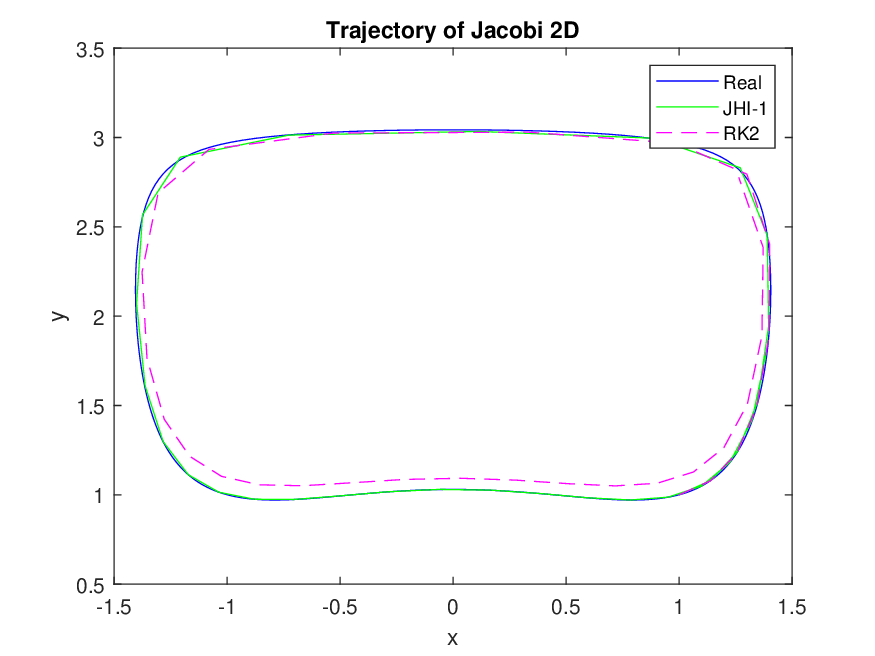}
	\caption{Comparison between the exact trajectory and those obtained by the JHI and RK-2 methods for the Hamiltonian \(H=\cos(x)\sin(y)\).}
	\label{fig:Jacobi2D2}
\end{figure}

To estimate the numerical order of the first-order JHI for this Hamiltonian, we compute solutions on \([0,1]\) with initial condition \(x_0=(1,1)\) using time grids with
\[
n=2^{k+1}+1,\qquad k=1,\dots,9.
\]

The reference solution is obtained using a high-order method with \(2^{11}+1\) time steps. Errors are computed by comparing
\[
x^{(k)}(i)-x^\ast\!\left(2^{10-k}(i-1)+1\right).
\]

The results in Table~\ref{Tab:2Dorder} indicate a numerical convergence rate close to second order.
\begin{table}[!ht]
\centering
\begin{tabular}{c c c}
\hline
$\Delta s$ & $\| \cdot \|_2$ error & Order \\ \hline
$2^{-2}$ & $1.61\times10^{-2}$ & -- \\
$2^{-3}$ & $4.80\times10^{-3}$ & $1.74$ \\
$2^{-4}$ & $1.30\times10^{-3}$ & $1.89$ \\
$2^{-5}$ & $3.30\times10^{-4}$ & $1.95$ \\
$2^{-6}$ & $8.50\times10^{-5}$ & $1.97$ \\
$2^{-7}$ & $2.10\times10^{-5}$ & $1.99$ \\
$2^{-8}$ & $5.40\times10^{-6}$ & $1.99$ \\
$2^{-9}$ & $1.30\times10^{-6}$ & $2.00$ \\ \hline
\end{tabular}
\caption{Error and numerical order of the first-order JHI for the 2D Jacobi Hamiltonian problem.}
\label{Tab:2Dorder}
\end{table}

In the present system, the evolution of the Hamiltonian along the flow is not $0$,
\[
E(H) = 2x\cos(x)\cos(y) + 2y\sin(x)\sin(y) \neq 0,
\] 
so the Hamiltonian is not preserved by the Jacobi flow. 
This precludes a direct comparison of $H$ along trajectories with contour values without knowledge of the exact solution.

To address this, we consider the lifted Poisson Hamiltonian
\[
\hat{H}(x,t) = t H(x),
\] 
which is conserved. 
The deviation along a trajectory can then be measured by
\[
\frac{\hat{H}(x_0,t_0) - \hat{H}(x,t)}{t}.
\] 

As shown in Figure~\ref{fig:Hamil2D}, the first-order Jacobi Hamiltonian integrator (JHI) exhibits an error of order $10^{-5}$, 
whereas the second-order Runge--Kutta method (RK-2) shows an error of order $10^{-3}$.

\begin{figure}[!ht]
\centering
\includegraphics[width=0.7\textwidth]{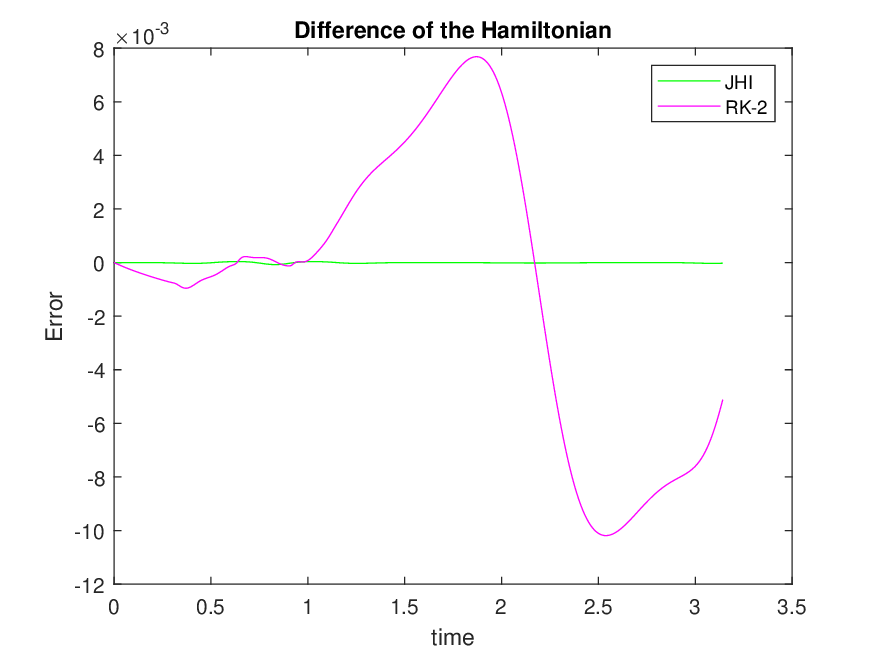}
\caption{Hamiltonian error along trajectories computed with JHI and RK-2.}
\label{fig:Hamil2D}
\end{figure}

\subsection{A three-dimensional Jacobi Hamiltonian problem}\label{sec: exe 3D}

Consider the following linear Jacobi structure
\begin{align*}
    \Lambda = 2x_2\frac{\partial}{\partial x_1}\wedge\frac{\partial}{\partial x_2} + 2x_3 \frac{\partial}{\partial x_1}\wedge \frac{\partial}{\partial x_3}, \qquad E = \frac{\partial}{\partial x_1},
\end{align*}
and the Hamiltonian $H(x) = x_1^2+x_2^2 + x_3^2$. The Hamiltonian vector field $X_H$ is
\begin{align*}
    X_H &= \Lambda(\cdot,dH) - HE(\cdot)\\
        &= (2x_2\partial_{x_2}H + 2x_3\partial_{x_3}H - H)\frac{\partial}{\partial x_1} - 2x_2\partial_{x_1}H \frac{\partial}{\partial x_2} - 2x_3\partial_{x_1}H \frac{\partial}{\partial x_3}.
\end{align*}

The homogeneous Poisson structure associated is
\begin{align}\label{eq: Pi J3D}
    \Pi = \frac{2x_2}{t}\frac{\partial}{\partial x_1}\wedge\frac{\partial}{\partial x_2} +\frac{2x_3}{t}\frac{\partial}{\partial x_1}\wedge\frac{\partial}{\partial x_3} + \frac{\partial}{\partial t}\wedge\frac{\partial}{\partial x_1}.
\end{align}

We are going to use the same technique as in the previous section to compute the symplectic bi-realization. From (\ref{eq: Pi J3D}), it is easy to check that $C_2 = t^2x_2$ and $C_3 = t^2x_3$ are Casimir functions of the given Poisson structure. Consider the transformation $F(x_1,x_2,x_3,t) = (x_1,t^2x_2, t^2x_3, t) = (X_1,X_2,X_3,X_4)$. If we compute the pushforward of $\Pi$ through $F$ we get $F_*\Pi = \dfrac{\partial}{\partial X_4}\wedge \dfrac{\partial}{\partial X_1}$, and now it is easy to compute the symplectic bi-realization in this case, which is
\begin{align*}
	\alpha_\text{can}(X,\eta) = (X_1+\frac{1}{2}\eta_4,X_2,X_3,X_4-\frac{1}{2}\eta_1)
\end{align*}
and $\beta_\text{can}(X,\eta) = \alpha(X,-\eta)$. So, the symplectic bi-realization on the original manifold is computed as $\alpha = F^{-1}\circ \alpha_\text{can}\circ (F^{-1})^*$ which is
\begin{align*}
	\alpha(x,t,\xi_x,\xi_t) = \left(x_1+\frac{1}{2}\xi_t - \frac{x_2\xi_2+x_3\xi_3}{t},\frac{t^2x_2}{(t-\frac{1}{2}\xi_1)^2}, \frac{t^2x_3}{(t-\frac{1}{2}\xi_1)^2}, t-\frac{1}{2}\xi_1 \right)
\end{align*}
and $\beta(x,t,\xi_x,\xi_t) = \alpha(x,t,-\xi_x,-\xi_t)$.

Computing the numerical method of higher order, we get $S_2=0$ and 
\begin{align*}
	S_3(x) = t\frac{3x_3^4- x_1^4 + 10x_1^2x_2^2 + 10x_1^2x_3^2 + 3x_2^4 + 6x_2^2x_3^2 }{4}.
\end{align*}

The first-order JHI (JHI-1) is defined by taking only $S_1$ in the expansion, while the third-order JHI (JHI-3) includes both $S_1$ and $S_3$, producing a higher-order accurate method. 

\paragraph{Numerical examples.}  
We compute the trajectories for $H(x) = x_1^2 + x_2^2+x_3^2$ with initial condition $x_0 = [-1,1,1]$, time 
time step $\Delta s = 0.1$, and  time span $[0,2]$. Figure~\ref{fig:Jacobi3D} shows that first- and third-order JHI trajectories approximate the exact flow much better  than the RK-2. 
\begin{figure}[!ht]
\centering
\includegraphics[width=11cm]{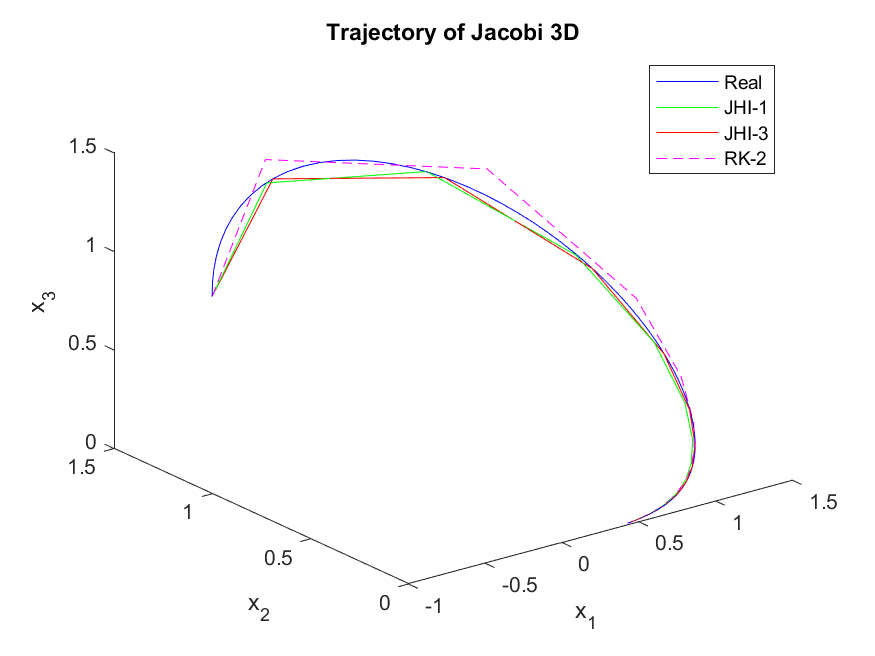}
\caption{Comparison between the actual trajectory and the JHI (first and third order) and RK-2 methods for a 3D problem.}
\label{fig:Jacobi3D}
\end{figure}

The numerical order is estimated on $[0, 0.9]$ by varying $\Delta s$. Table~\ref{Tab:3Dorder} shows 
that, for this example, first-order JHI-1 has order  $2$ and third-order JHI-3 has order $4$.

\begin{table}[!ht]
\centering
\begin{tabular}{c c c c c}
\hline
$\Delta s$ & $\|\cdot\|_2$ JHI-1 error & Order JHI-1 & $\|\cdot\|_2$ JHI-3 error & Order JHI-3 \\ \hline
0.225   & $1.99 \times 10^{-1}$  & --   & $6.58 \times 10^{-2}$  & --   \\
0.1125  & $6.25 \times 10^{-2}$  & 1.68 & $8.70 \times 10^{-3}$  & 2.91 \\
0.0563  & $1.68 \times 10^{-2}$  & 1.89 & $6.50 \times 10^{-4}$  & 3.75 \\
0.0281  & $4.30 \times 10^{-3}$  & 1.97 & $4.20 \times 10^{-5}$  & 3.93 \\
0.0141  & $1.10 \times 10^{-3}$  & 1.99 & $2.70 \times 10^{-6}$  & 3.98 \\
0.0070  & $2.70 \times 10^{-4}$  & 1.99 & $1.60 \times 10^{-7}$  & 3.99 \\
0.0035  & $6.70 \times 10^{-5}$  & 2.00 & $1.00 \times 10^{-8}$  & 3.99 \\
0.0018  & $1.60 \times 10^{-5}$  & 2.00 & $6.60 \times 10^{-10}$ & 4.00 \\ \hline
\end{tabular}
\caption{Error and numerical order of first- and third-order JHI for the 3D Jacobi Hamiltonian problem.}
\label{Tab:3Dorder}
\end{table}

Although $H$ is not conserved along the Jacobi flow ($X_H(H)=-2x_1 H$), the lifted Hamiltonian $H_p=tH$ is preserved. As shown in Figure~\ref{fig:Hamil3D} for the simulation over $[0,10]$ with step size $\Delta s = 0.01$, the first- and third-order JHI methods exhibit  superior Hamiltonian preservation compared with RK-2. 

\begin{figure}[!ht]
\centering
\includegraphics[width=0.7\textwidth]{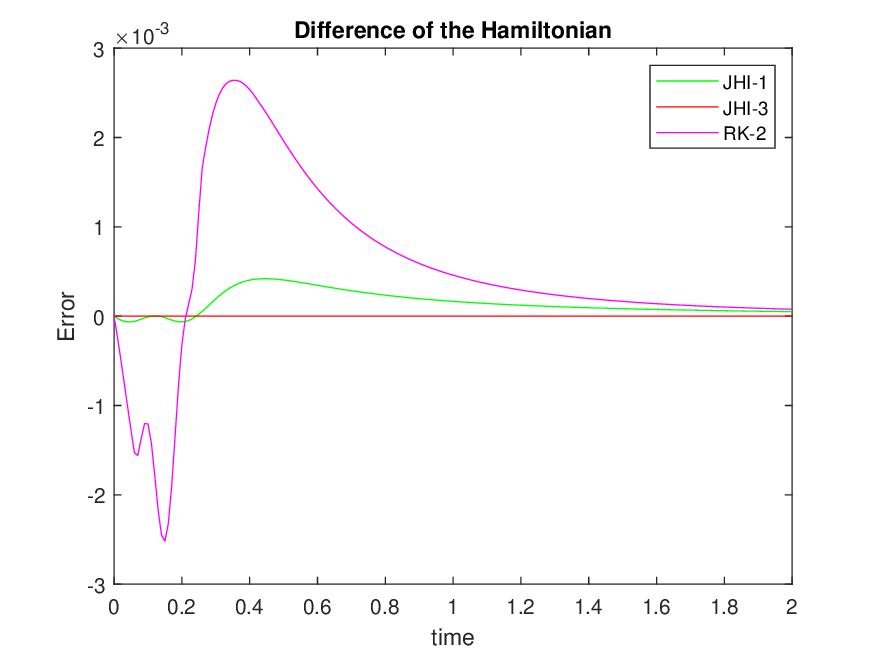}
\caption{Hamiltonian difference along 3D trajectories: JHI (1st and 3rd order) vs. RK-2.}
\label{fig:Hamil3D}
\end{figure}

Figure~\ref{fig:casimir3D} demonstrates that the Casimir functions 
$C_2 = t^2 x_2$ and $C_3 = t^2 x_3$ are preserved by both first- and third-order JHI schemes up to machine precision, with errors on the order of $10^{-14}$.

\begin{figure}[!ht]
\centering
\includegraphics[width=1\textwidth]{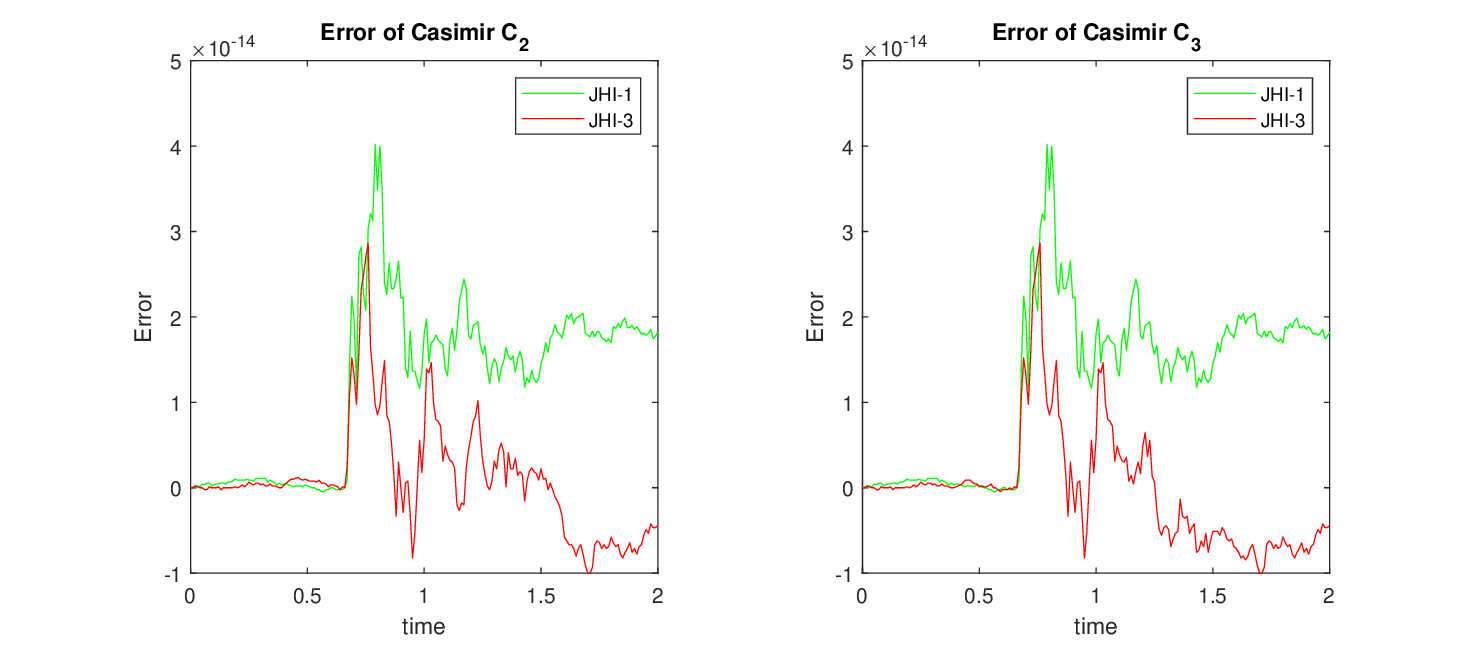}
\caption{Casimir preservation along 3D trajectories: JHI (1st and 3rd order).}
\label{fig:casimir3D}
\end{figure}

\subsection{A four-dimensional Jacobi Hamiltonian problem}\label{sec: exe 4D}

We consider a four-dimensional Jacobi structure defined by
\[
\Lambda = \cos(x_2)\frac{\partial}{\partial x_1}\wedge\frac{\partial}{\partial y_1}, \qquad
E = e^{y_2}\left(y_1\frac{\partial}{\partial x_1}+x_1\frac{\partial}{\partial y_1} \right),
\]
with $(x_1,y_1,x_2,y_2)\in\mathbb{R}^4$. Given a Hamiltonian function $H(x,y)$, the associated Hamiltonian vector field is
\[
X_H = \big(\cos(x_2)\partial_{y_1}H - e^{y_2}y_1H\big)\frac{\partial}{\partial x_1} - \big(\cos(x_2)\partial_{x_1}H + e^{y_2}x_1H\big)\frac{\partial}{\partial y_1}.
\]

To construct a Jacobi Hamiltonian Integrator, we first lift this Jacobi structure to a homogeneous Poisson structure:
\[
\Pi = \frac{\cos(x_2)}{t}\frac{\partial}{\partial x_1}\wedge\frac{\partial}{\partial y_1} + e^{y_2}\frac{\partial}{\partial t}\wedge\left(y_1\frac{\partial}{\partial x_1}+x_1\frac{\partial}{\partial y_1} \right).
\]

The null-space of $\Pi$ is easily computed:
\[
\mathcal{N}(\Pi) = \left\{
\begin{bmatrix}0\\0\\1\\0\\0\end{bmatrix},
\begin{bmatrix}0\\0\\0\\1\\0\end{bmatrix},
\begin{bmatrix}-tx_1 e^{y_2}/\cos(x_2)\\ t y_1 e^{y_2}/\cos(x_2)\\0\\0\\1\end{bmatrix}
\right\}.
\]

From this, we see that a Casimir function $C$ must have a gradient collinear with these vectors. Scaling the third vector by $\cos(x_2)/t$ and adding multiples of the first two vectors leads to
\[
\nabla C = (-x_1 e^{y_2}, y_1 e^{y_2}, f, g, \cos(x_2)/t),
\]
where $f$ and $g$ can be chosen freely. A natural choice is
\[
C(x,t) = \cos(x_2)\log t - \frac{e^{y_2}}{2}(x_1^2-y_1^2), \qquad f=\partial_{x_2}C, \ g=\partial_{y_2}C.
\]

Because finding a canonical transformation for this system is difficult, we construct an approximate first-order homogeneous bi-realization (as in \cite{cabrera2024}). This provides the basis for the Jacobi Hamiltonian Integrator of order 1 (JHI-1), and is given by
\begin{align*}
\alpha^{x_1}(x,t,\xi_x,\xi_t) &= x_1 + \frac{1}{2}\Big(-\frac{\cos(x_2)}{t}\xi_{y_1} + y_1 e^{y_2} \xi_t \Big),\\
\alpha^{y_1}(x,t,\xi_x,\xi_t) &= y_1 + \frac{1}{2}\Big(\frac{\cos(x_2)}{t}\xi_{x_1} + x_1 e^{y_2} \xi_t \Big),\\
\alpha^{x_2}(x,t,\xi_x,\xi_t) &= x_2, \quad \alpha^{y_2} = y_2,\\
\alpha^t(x,t,\xi_x,\xi_t) &= t - \big(y_1 e^{y_2} \xi_{x_1} + x_1 e^{y_2} \xi_{y_1}\big),
\end{align*}
with $\beta(x,t,\xi_x,\xi_t) = \alpha(x,t,-\xi_x,-\xi_t)$. This construction naturally defines the numerical method: JHI-1 uses this first-order bi-realization, while higher-order JHI methods (e.g., JHI-3) are obtained by computing additional terms in the recursive formula $S_k$ as in the 3D case. 

\paragraph{Numerical examples.} 
We take the Hamiltonian 
\[
H(x,y) = \cos(x_1) + \sin(y_1) + x_2 + y_2,
\]
and compute trajectories of the exact and numerical flows. The JHI trajectory is computed on $[0,4\pi]$ with time step $\Delta s = 0.001$ and initial condition $x_0 = [1, 1, \pi/4, -0.2]$. Figure~\ref{fig:Jacobi4D} shows excellent agreement between the JHI trajectory and the exact solution.

\begin{figure}[!ht]
\centering
\includegraphics[width=11 cm]{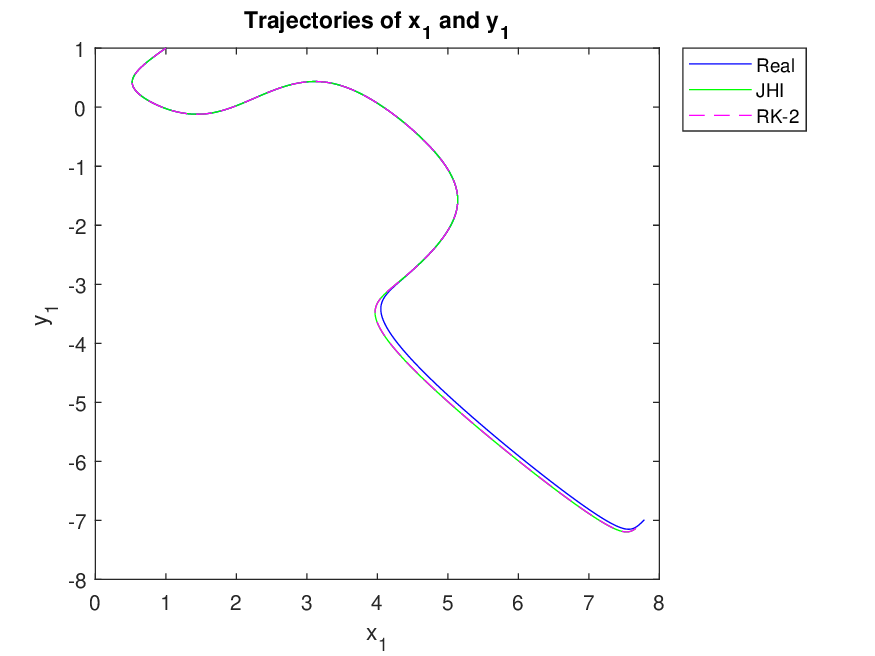}
\caption{Comparison between the exact trajectory and the JHI and RK-2 methods for the 4D Hamiltonian problem.}
\label{fig:Jacobi4D}
\end{figure}

To estimate the numerical order of the first-order JHI, we consider the time interval $[0,\pi]$ with the same initial condition. Table~\ref{Tab:4Dorder} shows that the method converges with order approximately $2$, despite the bi-realization being only first-order accurate.

\begin{table}[!ht]
\centering
\begin{tabular}{c c c}
\hline
$\Delta s$ & $\|\cdot\|_2$ error & Order \\ \hline
0.1963 & $1.49 \times 10^{-1}$ & -- \\
0.0982 & $3.71 \times 10^{-3}$ & 2.00 \\
0.0491 & $9.40 \times 10^{-4}$ & 1.98 \\
0.0245 & $2.40 \times 10^{-4}$ & 1.99 \\
0.0123 & $5.90 \times 10^{-5}$ & 1.99 \\
0.0061 & $1.40 \times 10^{-5}$ & 2.00 \\
0.0031 & $3.70 \times 10^{-6}$ & 2.00 \\
0.0015 & $9.30 \times 10^{-7}$ & 2.00 \\ \hline
\end{tabular}
\caption{Error and numerical order of first-order JHI for the 4D Jacobi Hamiltonian problem.}
\label{Tab:4Dorder}
\end{table}

Although $H$ is not preserved along the flow ($E(H)=x_1 e^{y_2}\cos(y_1)-y_1 e^{y_2}\sin(x_1)\neq0$), the JHI trajectory shows small Hamiltonian and Casimir errors (of order $10^{-8}$ and $10^{-6}$, respectively, for $[0,3\pi]$ with $\Delta s=0.001$), see Figure~\ref{fig:Hamil4D}.

\begin{figure}[!ht]
\centering
\includegraphics[width=\textwidth]{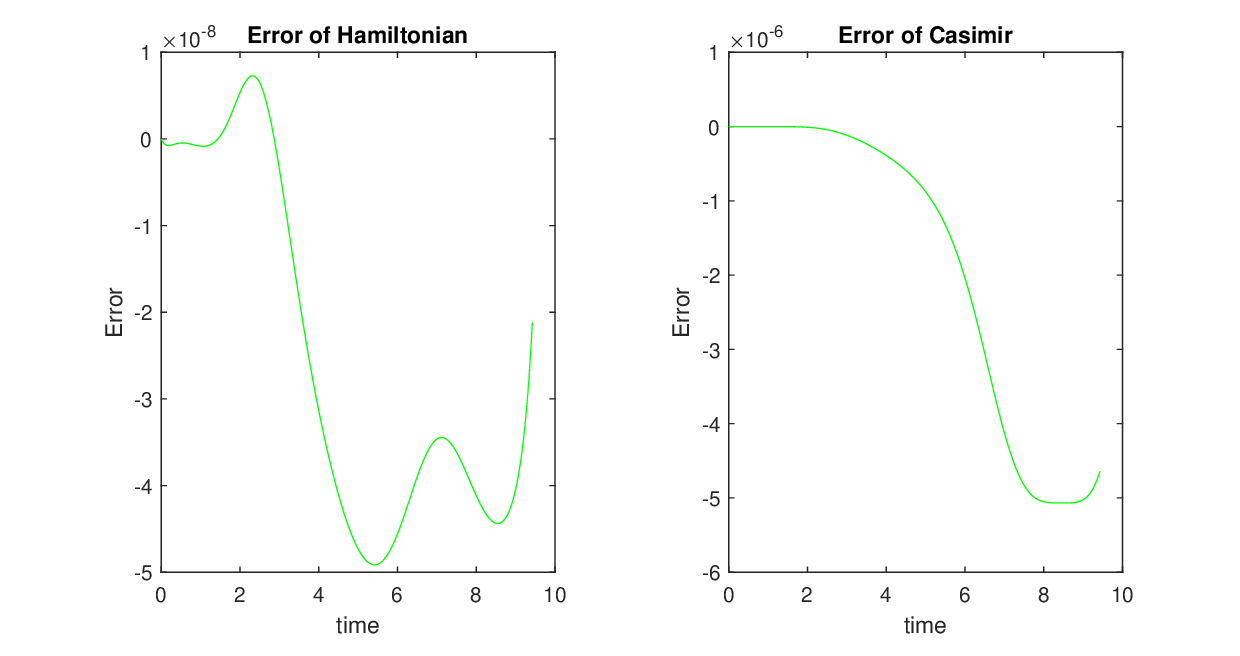}
\caption{Differences of Hamiltonian and Casimir functions along the numerical trajectories computed with JHI.}
\label{fig:Hamil4D}
\end{figure}

\subsection{Damped harmonic oscillator}\label{sec: exe Damped}
The damped harmonic oscillator is a classical example of a contact Hamiltonian system. It allows us to illustrate the construction of JHI for systems with non-conservative dynamics, where the Hamiltonian is not preserved.  

Consider the contact structure in (\ref{eq: contact bivetor field}) and the Hamiltonian
\[
H(q,p,z) = \frac{p^2}{2} + \frac{q^2}{2} + \gamma z, \qquad \gamma \in \mathbb{R},
\]
which models a damped parametric oscillator. The homogeneous Hamiltonian is 
\[
\hat{H}(q,p,z,t) = t H(q,p,z),
\] 
using the homogeneous symplectic structure as in (\ref{eq: poisson - contact}) and the symplectic bi-realization from (\ref{eq: alpha contact}).

Following the procedure in section \ref{sec: explicit construction}, we get
\begin{align*}
	\hat{H}(\alpha(dS^{(1)})) = t\left(1-\frac{s\gamma}{2}\right)\left(\frac{1}{2}(q-0.5sp)^2 + \frac{1}{2}\left(\frac{p+0.5sq}{1-0.5s\gamma}\right)^2 + \gamma(z+0.25sq^2+0.5s\gamma z-0.25sp^2)\right)
\end{align*}

For higher-order methods, $S_2 = 0$ and
\[
S_3 = \frac{1}{3!}\Bigg[\frac{q^2t}{4}(1-\gamma^2) -\frac{\gamma^3 tz}{2} + p^2 t\left(\frac{\gamma^2}{2}+\frac{1}{4}\right) + \gamma qpt \Bigg].
\]  

The corresponding JHI scheme is:
\begin{enumerate}
	\item find $y_n$ such that
	\begin{align*}
		\alpha(y_n, \Delta s \nabla \hat{H}(y_n)+\Delta s^3\nabla S_3(y_n)) = x_n
	\end{align*}
	\item calculate 
	\begin{align*}
		x_{n+1} = \beta(y_n, \Delta s \nabla \hat{H}(y_n)+\Delta s^3\nabla S_3(y_n)).
	\end{align*}
\end{enumerate}

\paragraph{Numerical examples.} 
Using time span $[0,10]$, step $\Delta s = 0.5$, initial condition $x_0=(1,0,0)$, and $\gamma = 0.01$, Figure~\ref{fig:contactdamped} shows that JHI-1 outperforms the semi-implicit symplectic Euler method, while JHI-3 nearly overlaps the exact solution.  

\begin{figure}[!ht]
\centering
\includegraphics[width=18cm]{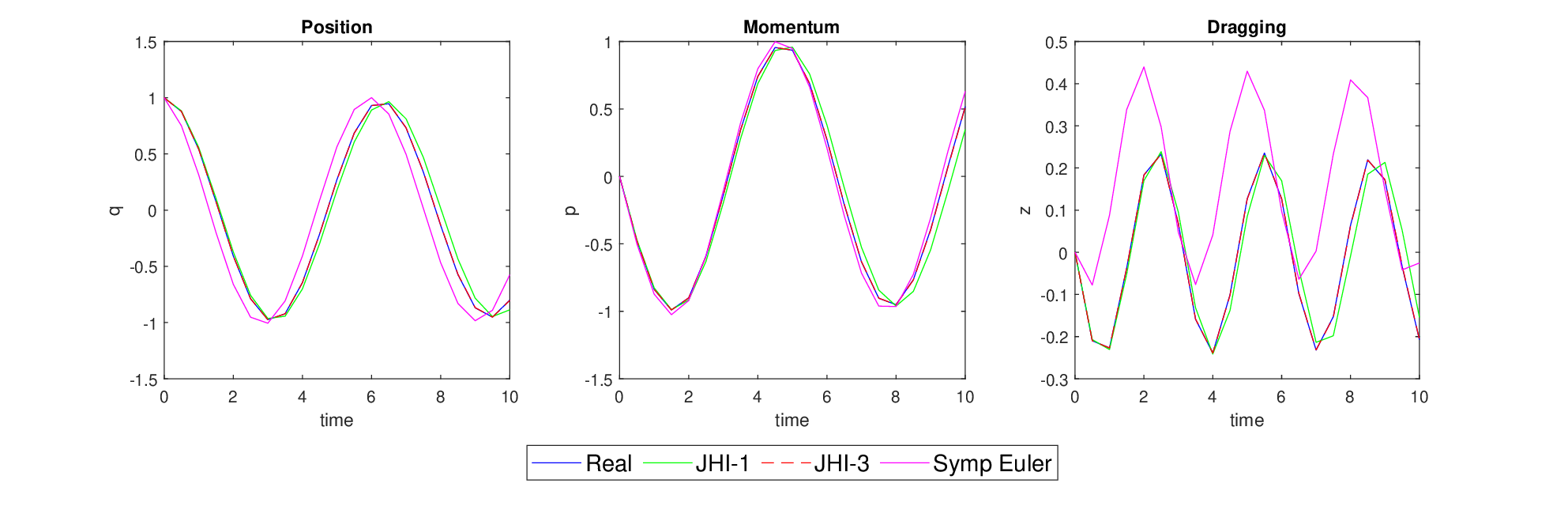}
\caption{Trajectories of the damped parametric oscillator using JHI (first and third order) and symplectic Euler methods.}
\label{fig:contactdamped}
\end{figure}

The numerical order is reported in Table \ref{Tab:contactorder}. As expected, JHI-1 and JHI-3 exhibit convergence near order 2 and 4, respectively.

\begin{table}[!ht]
\centering
\begin{tabular}{c c c c c}
\hline
$\Delta s$ & $\|\cdot\|_2$ JHI-1 error & Order JHI-1 & $\|\cdot\|_2$ JHI-3 error & Order JHI-3 \\ \hline
$2^{-1}$ & $3.45 \times 10^{-2}$ & --    & $9.30 \times 10^{-4}$  & --    \\
$2^{-2}$ & $9.90 \times 10^{-3}$ & 1.80  & $6.60 \times 10^{-5}$  & 3.81  \\
$2^{-3}$ & $2.60 \times 10^{-3}$ & 1.92  & $4.40 \times 10^{-6}$  & 3.92  \\
$2^{-4}$ & $6.60 \times 10^{-4}$ & 1.97  & $2.80 \times 10^{-7}$  & 3.97  \\
$2^{-5}$ & $1.60 \times 10^{-4}$ & 1.98  & $1.80 \times 10^{-8}$  & 3.98  \\
$2^{-6}$ & $4.20 \times 10^{-5}$ & 1.99  & $1.10 \times 10^{-9}$  & 3.99  \\
$2^{-7}$ & $1.00 \times 10^{-5}$ & 2.00  & $7.00 \times 10^{-11}$ & 4.00  \\ \hline
\end{tabular}
\caption{Error and numerical order of first- and third-order JHI for the damped contact Hamiltonian problem.}
\label{Tab:contactorder}
\end{table}

Since $E(H)=-\gamma$, the Hamiltonian is not preserved. Figure \ref{fig:dampedhamiltonian} shows that the deviation oscillates around zero with magnitude $\mathcal{O}(10^{-4})$ for JHI-1 and $\mathcal{O}(10^{-5})$ for JHI-3.

\begin{figure}[!ht]
\centering
\includegraphics[width=\textwidth]{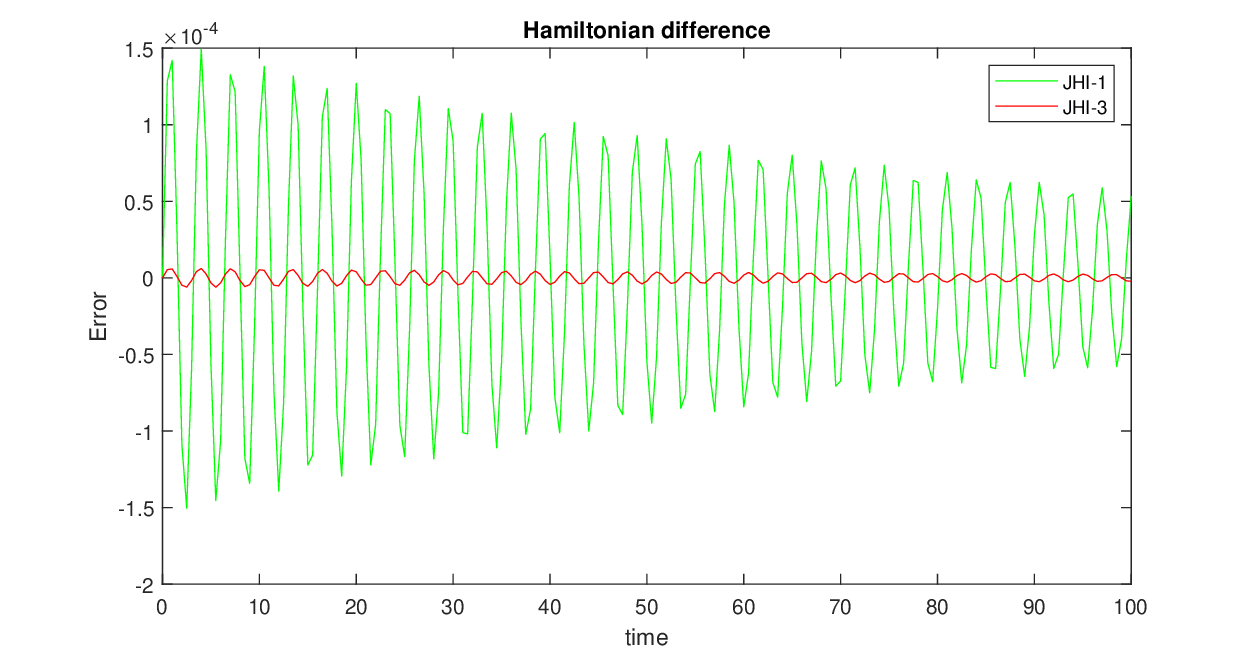}
\caption{Difference in Hamiltonian along the trajectories of first- and third-order JHI methods.}
\label{fig:dampedhamiltonian}
\end{figure}

\subsection{Lotka-Volterra system}\label{sec: exe LV}

The Lotka-Volterra system is a classical example of a Poisson Hamiltonian system modeling the interaction among $n$ species. In its standard form, the system is isolated, meaning $E = 0$. Here, we consider a closed system by promoting the Poisson structure to a Jacobi structure.

First, consider the two-dimensional quadratic bivector field
\[
\Lambda = - a x y \frac{\partial}{\partial x}\wedge \frac{\partial}{\partial y}.
\]

We seek a vector field $E = e_1 \frac{\partial}{\partial x} + e_2 \frac{\partial}{\partial y}$ such that the Jacobi conditions
\[
[\Lambda, \Lambda] = 2 E \wedge \Lambda, \qquad [E, \Lambda] = 0
\]
are satisfied.  

The first condition is trivially satisfied, since $[\Lambda, \Lambda] = 0 = 2 E \wedge \Lambda$ for any $e_1$ and $e_2$; in particular, $\Lambda$ is Poisson. The second condition yields
\begin{align*}
[E, \Lambda] &= \left[e_1 \frac{\partial}{\partial x}, \Lambda \right] + \left[e_2 \frac{\partial}{\partial y}, \Lambda \right] \\
&= a \Big(-y e_1 - x e_2 + xy \frac{\partial e_1}{\partial x} + xy \frac{\partial e_2}{\partial y} \Big) \frac{\partial}{\partial x} \wedge \frac{\partial}{\partial y}.
\end{align*}

A solution is 
\[
e_1 = c xy + dx + fx, \qquad e_2 = c xy + dy - fx, \quad c,d,f \in \mathbb{R}.
\]

Consider the Hamiltonian 
\[
H(x,y) = x - \lambda_1 \log x + y - \lambda_2 \log y,
\]
for which the associated Jacobi Hamiltonian vector field is
\begin{align*}
X_H &= \Lambda(\cdot,dH) - H E(\cdot) \\
&= \left[-axy + a \lambda_2 x - H (cxy + dx + fx) \right] \frac{\partial}{\partial x} + \left[ axy - a \lambda_1 y - H (cxy + dx - fx) \right] \frac{\partial}{\partial y}.
\end{align*}

The corresponding homogeneous Poisson structure is
\[
\Pi = -\frac{a xy}{t} \frac{\partial}{\partial x} \wedge \frac{\partial}{\partial y} + (cxy + dx + fx) \frac{\partial}{\partial t} \wedge \frac{\partial}{\partial x} + (cxy + dy - fy) \frac{\partial}{\partial t} \wedge \frac{\partial}{\partial y}.
\]

To compute a symplectic bi-realization, we assume $x, y > 0$ so that the rank of $\Pi$ is constant. A Casimir function for the homogeneous Poisson structure is
\[
Z(x,y,t) = \log t + \frac{c}{a} (x-y) + \beta \log x - \gamma \log y, \qquad \beta = \frac{d-f}{a}, \ \gamma = \frac{d+f}{a}.
\]

Setting $u = \log x$, $v = \log y$, we have
\[
\{u,v\} = -\frac{axy}{t} \frac{1}{x} \frac{1}{y} = -\frac{a}{t} \neq 1,
\]
so we introduce a canonical transformation $(u,v) \mapsto (U,V)$ satisfying
\[
(U_u V_v - U_v V_u) \{u,v\} = 1.
\]

Setting $U = u = \log x$, we obtain $V_v = -t/a$. Using $Z$, we have
\[
t = \exp\left( Z - \frac{c}{a} (e^u - e^v) - \beta u + \gamma v \right) =: \phi(Z,u,v),
\]
and
\[
V(U,v,Z) = -\frac{1}{a} \int_0^v \phi(Z,u,s) \, ds.
\]

For simplicity, we set $c=0$ and $\beta =0$ (i.e., $d=f$), giving
\[
V = \frac{t}{a\gamma} (y^{-\gamma} - 1).
\]

Thus, the transformation $F(x,y,t) = (U,V,Z)$ satisfies $\{U,V\} = 1$, and a symplectic bi-realization for the canonical Poisson structure is
\[
\alpha_\text{can}(U,V,Z,\eta_U,\eta_V,\eta_Z) = \big(U - \tfrac{1}{2} \eta_V, V + \tfrac{1}{2} \eta_U, Z\big),
\]
so that for the original Poisson structure, $\alpha = F^{-1} \circ \alpha_\text{can} \circ (F^{-1})^*$. The corresponding cotangent transformation is obtained via $J^T \eta = \xi$, with
\[
J = \begin{bmatrix}
\frac{1}{x} & 0 & 0 \\
0 & -\frac{t}{a} y^{-\gamma - 1} & \frac{y^{-\gamma}-1}{a \gamma} \\
0 & -\frac{\gamma}{y} & \frac{1}{t}
\end{bmatrix}, \qquad \eta = J^{-T} \xi.
\]

Explicitly,
\[
\eta_U = x \xi_x, \quad
\eta_V = - a \gamma \xi_t - \frac{a y}{t} \xi_y, \quad
\eta_Z = t y^{-\gamma} \xi_t + \frac{y}{\gamma} (y^{-\gamma}-1) \xi_y,
\]
and
\[
\alpha_\text{can} \circ (F^{-1})^* = \big(\log x + \tfrac12(a\gamma \xi_t + \frac{ay}{t} \xi_y), \ t\frac{y^{-\gamma}-1}{a\gamma} + \tfrac12 x \xi_x, \ \log t - \gamma \log y \big).
\]

Finally, the bi-realization for the original coordinates is
\begin{align*}
x' &= e^{U'} = x \, e^{\frac12 (d+f) \xi_t + \frac{ay \xi_y}{t}}, \\
y' &= (1-C)^{1/\gamma}, \\
t' &= e^{Z'} (1-C), \qquad
C = a \gamma V' e^{-Z'}.
\end{align*}

\paragraph{Numerical examples.}
For the numerical simulations, consider the parameters $\lambda_1 = 3$, $\lambda_2 = 4$ for the Hamiltonian, and $a=1$, $c=0$, $d=f=1$ for the Jacobi structure. The vector field in the domain $(x,y) \in [0,12]\times[0,12]$ is shown in Figure~\ref{fig:L-V_VF}. Unlike the classical Lotka-Volterra system where all trajectories are closed around the equilibrium, here the orbits are only closed near the equilibrium, while trajectories further away tend toward zero.

\begin{figure}[!ht]
\centering
\includegraphics[width=11cm]{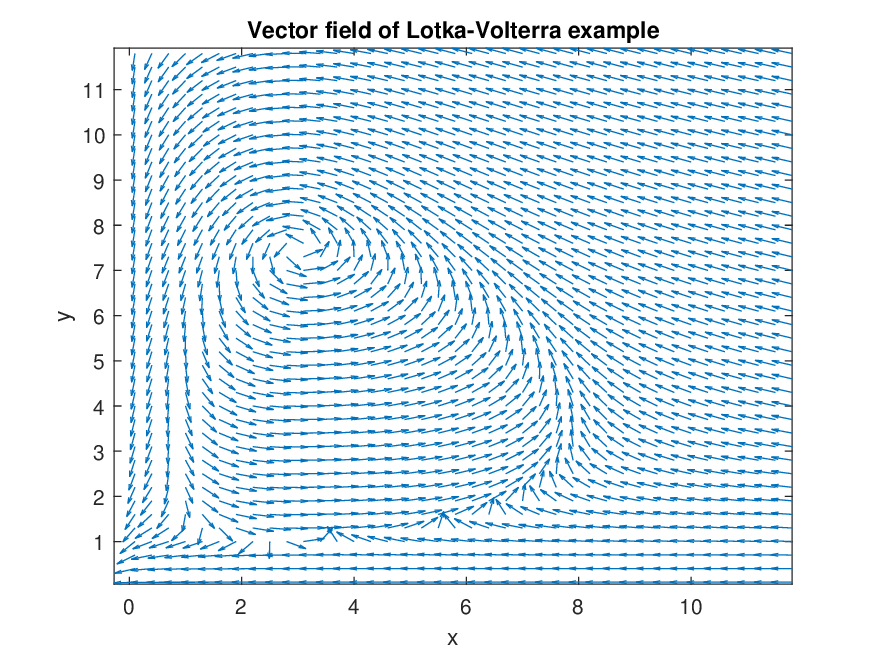}
\caption{Vector field of the modified Lotka-Volterra problem in 2 dimensions.}
\label{fig:L-V_VF}
\end{figure}

To compute trajectories, we use a time span $[0,5]$ with step size $\Delta s = 0.05$ and initial condition $x_0=(4,2)$. Figure~\ref{fig:L-Vtrajectory} compares the trajectories obtained by the first-order Jacobi Hamiltonian Integrator (JHI-1) and the classical RK-2 method. We observe that the JHI-1 method preserves the closed trajectory, while RK-2 does not.

\begin{figure}[!ht]
\centering
\includegraphics[width=11cm]{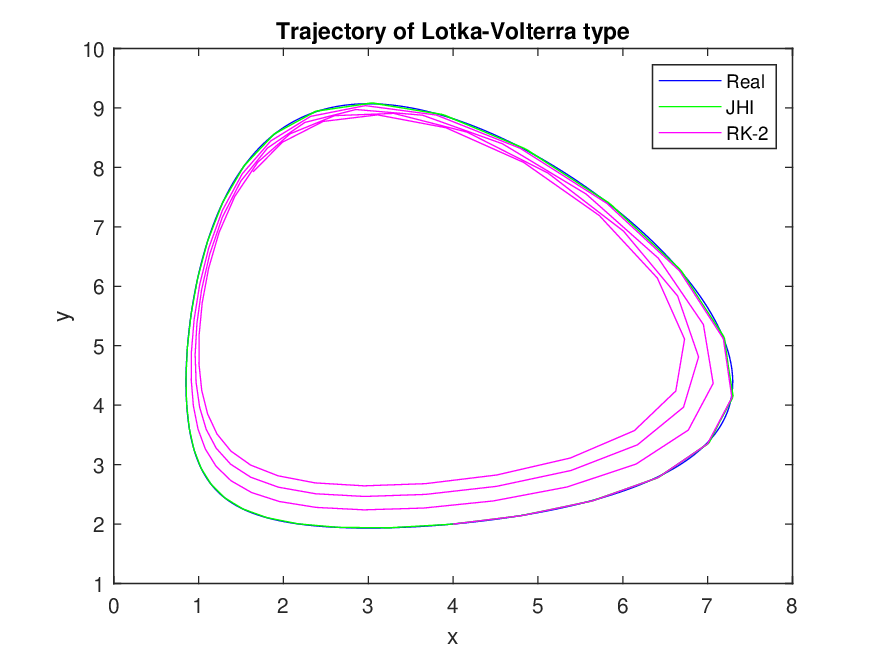}
\caption{Comparison between the true trajectory and those obtained by JHI-1 and RK-2 methods for the modified Lotka-Volterra system.}
\label{fig:L-Vtrajectory}
\end{figure}

The numerical order of JHI-1 is shown in Table~\ref{Tab:LVorder1}. As expected, the method demonstrates second-order convergence.

\begin{table}[!ht]
\centering
\begin{tabular}{c c c}
\hline
$\Delta s$ & $\|\cdot\|_2$ error & Order \\ \hline
$2^{-2}$ & $4.60 \times 10^{-1}$ & -- \\
$2^{-3}$ & $1.48 \times 10^{-1}$ & 1.64 \\
$2^{-4}$ & $3.83 \times 10^{-2}$ & 1.95 \\
$2^{-5}$ & $9.70 \times 10^{-3}$ & 1.99 \\
$2^{-6}$ & $2.40 \times 10^{-3}$ & 2.00 \\
$2^{-7}$ & $6.00 \times 10^{-4}$ & 2.00 \\
$2^{-8}$ & $1.50 \times 10^{-4}$ & 2.00 \\
$2^{-9}$ & $3.70 \times 10^{-5}$ & 2.00 \\ \hline
\end{tabular}
\caption{Error and numerical order of JHI-1 for the modified Lotka--Volterra problem.}
\label{Tab:LVorder1}
\end{table}

The system is not strictly Hamiltonian for the Poisson structure $\Lambda$, but it is instead for the Jacobi structure $(\Lambda, E)$, and so the Hamiltonian is not preserved exactly. In this case,
\[
E(H) = 2 c xy + d(x+y) + f(x-y) - \lambda_1 (c y + d + f) - \lambda_2 (c x + d - f) = 2 (x-3),
\]
so it varies along trajectories. Figure~\ref{fig:HamilLV} shows the Hamiltonian error over a time span $[0,10]$ with step $\Delta s = 0.01$. The error is periodic due to the closed trajectory and remains on the order of $10^{-4}$, demonstrating the good long-time behavior of JHI-1.

\begin{figure}[!ht]
\centering
\includegraphics[width=11cm]{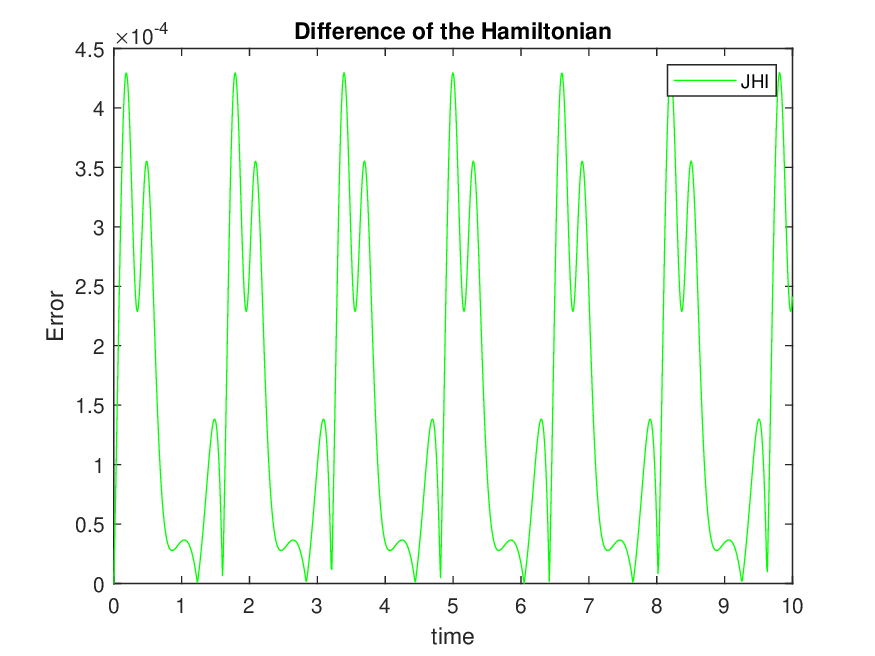}
\caption{Difference in the Hamiltonian along the JHI-1 trajectory for the modified Lotka-Volterra system.}
\label{fig:HamilLV}
\end{figure}

\subsection{Rigid-body rotation}\label{sec: exe RBR}

Finally, we consider a three-dimensional Jacobi system modeling rigid-body rotation with a linear bivector. This example highlights the application of JHI to higher-dimensional systems with polynomial Jacobi vector fields and illustrates the limitations of approximated bi-realizations.

Consider the linear bi-vector field
\[
\Lambda = x_3 \frac{\partial}{\partial x_2}\wedge \frac{\partial}{\partial x_1} + x_2 \frac{\partial}{\partial x_1}\wedge \frac{\partial}{\partial x_3} + x_1 \frac{\partial}{\partial x_3}\wedge \frac{\partial}{\partial x_2},
\]
and let $E = f_1 \frac{\partial}{\partial x_1} + f_2 \frac{\partial}{\partial x_2} + f_3 \frac{\partial}{\partial x_3}$.

To define a Jacobi structure $(\Lambda,E)$, the Jacobi conditions
\[
[\Lambda,\Lambda] = 2 E\wedge \Lambda, \quad [E,\Lambda] = 0
\]
must hold. Since $\Lambda$ is Poisson, $[\Lambda,\Lambda]=0$, so $2E\wedge \Lambda = 0$ implies
\[
f_1 x_1 + f_2 x_2 + f_3 x_3 = 0.
\]

A solution is given by
\begin{align*}
f_1 &= a_1 x_2 x_3 + d_3 x_2 + d_1 x_3,\\
f_2 &= a_2 x_1 x_3 - d_3 x_1 + d_2 x_3,\\
f_3 &= a_3 x_1 x_2 - d_1 x_1 - d_2 x_2,
\end{align*}
with $a_1+a_2+a_3 = 0$, which satisfies both Jacobi conditions.

The Hamiltonian function is
\[
H(x) = \frac{I_2+I_3}{2} x_1^2 + \frac{I_1+I_3}{2} x_2^2 + \frac{I_1+I_2}{2} x_3^2,
\]
with $I_1,I_2,I_3\in\mathbb{R}$, the inertia values, giving the Jacobi Hamiltonian vector field
\begin{align*}
X_H &= \Lambda(\cdot,dH) - H E(\cdot) \\
&= \left[(I_2-I_3)x_2x_3 - H f_1\right]\frac{\partial}{\partial x_1} + \left[(I_3-I_1)x_1x_3 - H f_2\right]\frac{\partial}{\partial x_2} + \left[(I_1-I_2)x_1x_2 - H f_3\right]\frac{\partial}{\partial x_3}.
\end{align*}

The associated homogeneous Poisson structure is
\[
\Pi = \frac{x_3}{t}\frac{\partial}{\partial x_2}\wedge \frac{\partial}{\partial x_1} + \frac{x_2}{t} \frac{\partial}{\partial x_1}\wedge \frac{\partial}{\partial x_3} + \frac{x_1}{t} \frac{\partial}{\partial x_3}\wedge \frac{\partial}{\partial x_2} + \frac{\partial}{\partial t}\wedge(f_1 \frac{\partial}{\partial x_1} + f_2 \frac{\partial}{\partial x_2} + f_3 \frac{\partial}{\partial x_3}).
\]

Since the rank of $\Pi$ is 2, there is one Casimir function
\[
C_1 = x_1^2 + x_2^2 + x_3^2.
\]

For this system, an exact symplectic bi-realization is difficult to obtain analytically due to the quadratic dependence of $f_i$ on the coordinates. To apply the Jacobi Hamiltonian integrator, we construct an \textbf{approximated symplectic bi-realization} by linearizing the cotangent contribution in $\xi$, which preserves the canonical Poisson brackets up to first order in $\xi$. This approach ensures that the resulting numerical integrator retains the geometric structure of the Jacobi system locally and provides accurate trajectories for short- to medium-term simulations, even if global conservation properties are only approximate.

The approximated symplectic bi-realization is defined as
\begin{align*}
\alpha^1(x,t,\xi_x,\xi_t) &= x_1 + \frac12\left(\frac{x_3}{t}\xi_2 + f_1 \xi_t - \frac{x_2}{t}\xi_3\right), &
\beta^1(x,t,\xi_x,\xi_t) &= x_1 - \frac12\left(\frac{x_3}{t}\xi_2 + f_1 \xi_t - \frac{x_2}{t}\xi_3\right),\\
\alpha^2(x,t,\xi_x,\xi_t) &= x_2 + \frac12\left(\frac{x_1}{t}\xi_3 + f_2 \xi_t-\frac{x_3}{t}\xi_1 \right), &
\beta^2(x,t,\xi_x,\xi_t) &= x_2 - \frac12\left(\frac{x_1}{t}\xi_3 + f_2 \xi_t-\frac{x_3}{t}\xi_1 \right),\\
\alpha^3(x,t,\xi_x,\xi_t) &= x_3 + \frac12\left(\frac{x_2}{t}\xi_1 + f_3 \xi_t - \frac{x_1}{t}\xi_2\right), &
\beta^3(x,t,\xi_x,\xi_t) &= x_3 - \frac12\left(\frac{x_2}{t}\xi_1 + f_3 \xi_t - \frac{x_1}{t}\xi_2\right),\\
\alpha^4(x,t,\xi_x,\xi_t) &= t - \frac12(f_1 \xi_1 + f_2 \xi_2 + f_3 \xi_3), &
\beta^4(x,t,\xi_x,\xi_t) &= t + \frac12(f_1 \xi_1 + f_2 \xi_2 + f_3 \xi_3).
\end{align*}

\begin{remark}
The classical rigid-body rotation can be transformed into a Lie-Poisson formulation. However, with quadratic $f_i$, this transformation is generally not possible; it only reduces to Lie-Poisson if $a_1=a_2=a_3=0$. The approximated bi-realization allows the JHI method to be applied while preserving the local geometric structure of the Jacobi system.
\end{remark}

\paragraph{Numerical examples.}

We first consider the parameters $I_1=5$, $I_2=I_3=10$, $a_1=a_2=0.2$, $a_3=-0.4$, $d_1=d_2=d_3=0.1$. The simulation uses a time span $[0,2]$, step $\Delta s=0.005$, and initial condition $x_0=(1,1,1)$. Figure~\ref{fig: RBR trajectory 1} shows that the JHI method coincides with the true trajectory.

\begin{figure}[!ht]
\centering
\includegraphics[width=10cm]{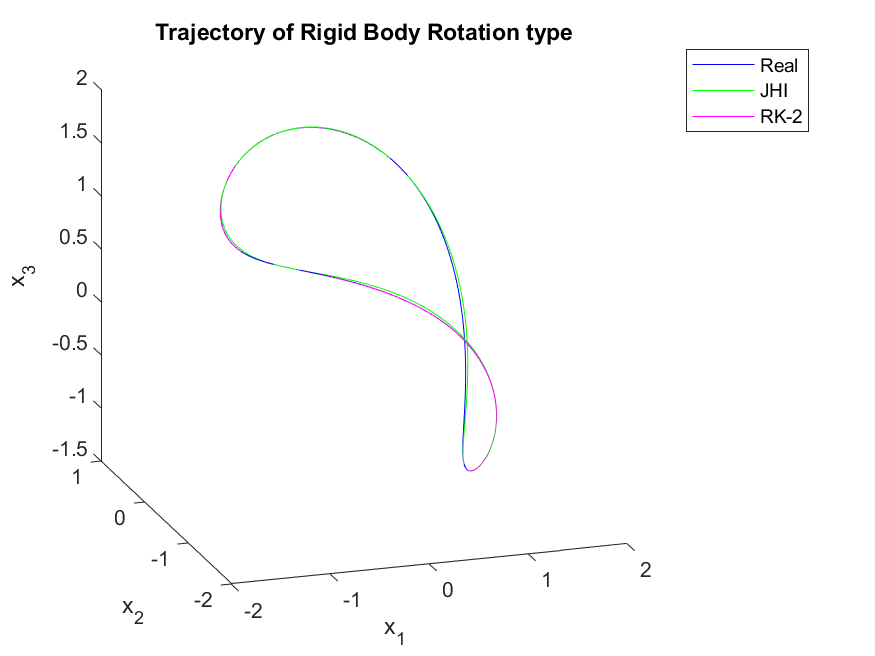}
\caption{Comparison of the true trajectory with JHI and RK-2 for the modified rigid-body rotation with $I_1=5$, $I_2=I_3=10$.}
\label{fig: RBR trajectory 1}
\end{figure}

Next, we test a more unstable configuration: $I_1=1$, $I_2=\pi$, $I_3=10$, with the same Jacobi parameters. Figure~\ref{fig: RBR trajectory 2} shows that JHI trajectories remain closed but do not exactly coincide with the true solution due to the approximated bi-realization, while RK-2 exhibits better local accuracy.

\begin{figure}[!ht]
\centering
\includegraphics[width=10cm]{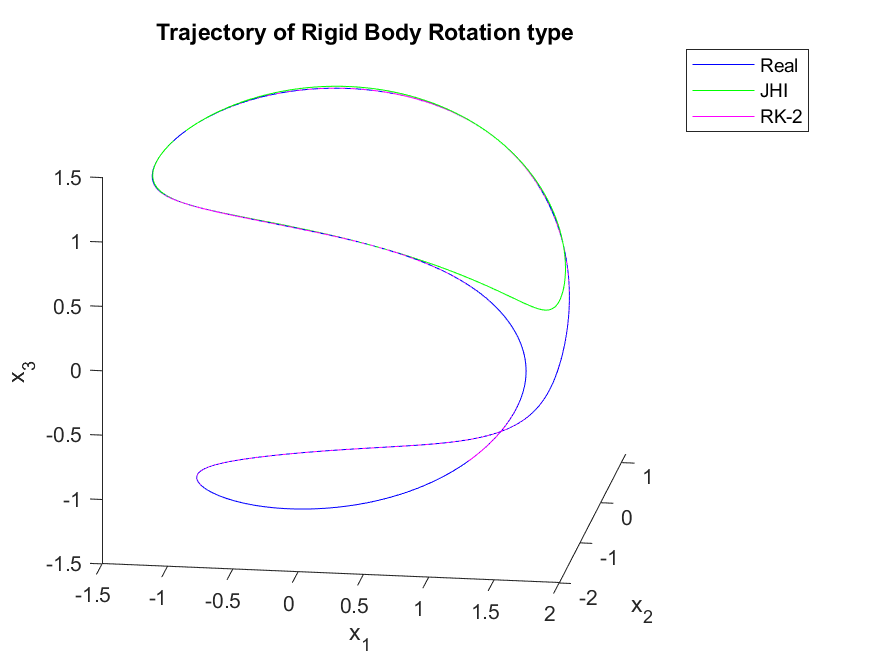}
\caption{Comparison of the true trajectory with JHI and RK-2 for the modified rigid-body rotation with $I_1=1$, $I_2=\pi$, $I_3=10$.}
\label{fig: RBR trajectory 2}
\end{figure}

The Hamiltonian is not conserved exactly, since
\begin{align*}
    E(H) =& x_1x_2x_3\left(a_1(I_2+I_3) + a_2(I_1+I_3) + a_3(I_1+I_2) \right) \\
    &+ d_3x_1x_2(I_2-I_1)+ d_1x_1x_3(I_3-I_1) + d_2x_2x_3(I_3-I_2).
\end{align*}

For the first configuration ($I_1=5$, $I_2=I_3=10$), the Hamiltonian error over the trajectory remains bounded, as shown in Figure~\ref{fig: Hamil RBR}.

\begin{figure}[!ht]
\centering
\includegraphics[width=\textwidth]{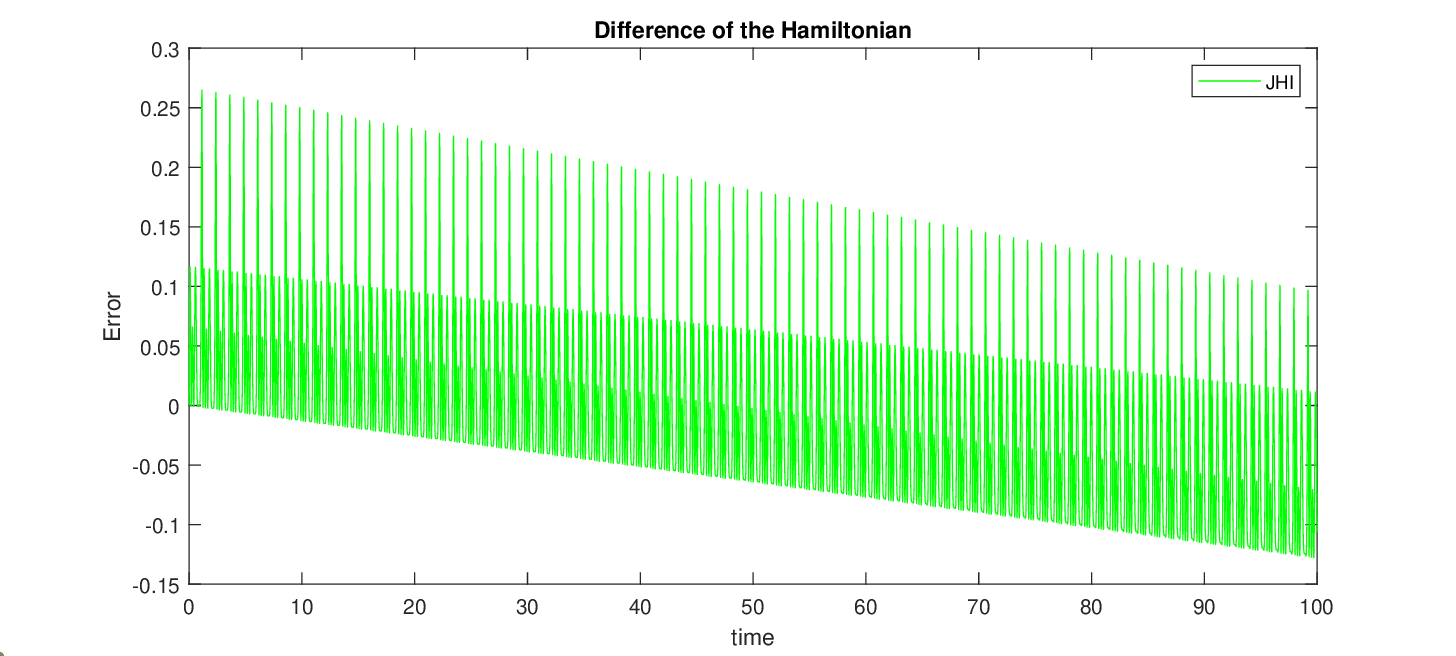}
\caption{Difference in the Hamiltonian along the JHI trajectory for the modified rigid-body rotation with $I_1=5$, $I_2=I_3=10$.}
\label{fig: Hamil RBR}
\end{figure}

\section{Conclusion}

In this work, we have presented a systematic framework for the construction of Jacobi Hamiltonian integrators (JHIs), exploiting the geometric relationship between Jacobi manifolds and homogeneous Poisson manifolds. By applying Poissonization techniques and constructing homogeneous symplectic bi-realizations, we demonstrated that the JHI construction reduces to the creation of homogeneous Poisson Hamiltonian integrators. This approach allows established geometric techniques from the Poisson and symplectic contexts to be adapted to Jacobi systems while preserving the underlying homogeneous structures.

A central contribution of this work is the explicit construction of JHIs of arbitrary order using a recursive algorithm based on the Magnus expansion and exact homogeneous Lagrangian bisections. The resulting methods preserve intrinsic homogeneity, ensuring that the integrators generate well-defined Jacobi maps on the original manifold. Backward error analysis shows that the numerical flow produced by a JHI coincides, up to the expected order, with the exact flow of a modified Jacobi Hamiltonian system, guaranteeing long-time qualitative fidelity.

The numerical experiments illustrate the advantages of JHIs over classical time integration methods. Low-dimensional examples confirm the theoretical order of accuracy, highlight the role of symplectic bi-realizations, and demonstrate the method's robustness when approximate realizations are employed. Applications to classical models further validate the geometric consistency and competitiveness of JHIs, with certain examples reproducing the continuous flow exactly and preserving geometric features such as Hamiltonian or Casimir functions.

Overall, this work establishes Jacobi Hamiltonian integrators as a natural extension of geometric integration techniques to Jacobi manifolds. The proposed framework opens several avenues for future research, including the systematic construction of approximate symplectic bi-realizations, adaptive time-stepping strategies, and applications to larger and higher-dimensional systems. These developments are expected to further strengthen the role of geometric numerical integration in the study of nonconservative and contact-type Hamiltonian dynamics.

\subsubsection*{Acknowledgments} The authors acknowledge financial support by the Centre for Mathematics of the University of Coimbra (CMUC, https://doi.org/10.54499/UID/00324/2025) under the Portuguese Foundation for Science and Technology (FCT), through Grants UID/00324/2025, and UID/PRR/00324/2025. Gon\c calo Inoc\^encio Oliveira acknowledges FCT for support under the Ph.D.
Scholarship 2024.00328.BD.

\newpage
\bibliographystyle{plain}
\bibliography{biblio}
\end{document}